\numberwithin{equation}{section}
\numberwithin{figure}{section}
\theoremstyle{plain}
\newtheorem{thm}{\protect\theoremname}[section]
  \theoremstyle{plain}
  \newtheorem{lem}[thm]{\protect\lemmaname}
  \theoremstyle{plain}
  \newtheorem{prop}[thm]{\protect\propositionname}
\def\R{\mathbb R}
\def\wq{\infty}
\def\cal{\mathcal}
\def\al{\alpha}
\def\be{\beta}
\def\ga{\gamma}
\def\de{\delta}
\def\ep{\epsilon}
\def\la{\lambda}
\def\var{\varphi}
\def\om{\omega}
\def\na{\nabla}
\def\Ga{\Gamma}
\def\Om{\Omega}
\def\De{\Delta}
\def\pa{\partial}
\numberwithin{equation}{section}
\theoremstyle{definition}
  \providecommand{\lemmaname}{Lemma}
  \providecommand{\propositionname}{Proposition}
\providecommand{\theoremname}{Theorem}
\begin{document}

\title{\title[Uniqueness Of Solutions To  Quasilinear Equations] \,Uniqueness
of Positive Radial Solutions To Singular Critical Growth Quasilinear
Elliptic Equations }

\author{Cheng-Jun He and Chang-Lin Xiang}

\address{[Cheng-Jun He] Wuhan Institute of Physics and Mathematics, Chinese
Academy of Sciences, P.O. Box 71010, Wuhan, 430071, P. R. China.}

\email{[Cheng-Jun He] cjhe@wipm.ac.cn}

\address{[Chang-Lin Xiang] University of Jyvaskyla, Department of Mathematics
and Statistics, P.O. Box 35, FI-40014 University of Jyvaskyla, Finland.}

\email{[Chang-Lin Xiang] Xiang\_math@126.com}

\thanks{This is part of the PhD work of the second named author at the University
of Jyv\"askyl\"a.}
\begin{abstract}
In this paper, we prove that there exists at most one positive radial
weak solution to the following quasilinear elliptic equation with
singular critical growth
\[
\begin{cases}
-\Delta_{p}u-{\displaystyle \frac{\mu}{|x|^{p}}|u|^{p-2}u}{\displaystyle =\frac{|u|^{\frac{(N-s)p}{N-p}-2}u}{|x|^{s}}}+\la|u|^{p-2}u & \text{in }B,\\
u=0 & \text{on }\pa B,
\end{cases}
\]
where $B$ is an open finite ball in $\R^{N}$ centered at the origin,
$1<p<N$, $-\wq<\mu<((N-p)/p)^{p}$, $0\le s<p$ and $\la\in\R$.
A related limiting problem is also considered.
\end{abstract}

\maketitle
{\small
\noindent {\bf Keywords:} Quasilinear elliptic equations; Singular critical growth; Positive radial solutions; Pohozaev identity; Uniqueness; Asymptotic behaviors
\smallskip
\newline\noindent {\bf 2010 Mathematics Subject Classification: } 35A24; 35B33; 35B40; 35J75; 35J92

\tableofcontents{}

\section{Introduction and main results}

In this paper, we consider the following quasilinear elliptic equation
\begin{equation}
\begin{cases}
-\Delta_{p}u-{\displaystyle \frac{\mu}{|x|^{p}}|u|^{p-2}u}={\displaystyle \frac{|u|^{p^{*}(s)-2}u}{|x|^{s}}}+\la|u|^{p-2}u & \text{in }B,\\
u=0 & \text{on }\pa B,
\end{cases}\label{eq: Local Object}
\end{equation}
where $B$ is an open finite ball in $\R^{N}$ centered at the origin,
$1<p<N$, $-\wq<\mu<\bar{\mu}=((N-p)/p)^{p}$, $0\le s<p$, $p^{*}(s)=(N-s)p/(N-p)$,
$\la\in\R$ and
\begin{eqnarray*}
\Delta_{p}u=\sum_{i=1}^{N}\partial_{x_{i}}(|\nabla u|^{p-2}\partial_{x_{i}}u), &  & \nabla u=(\partial_{x_{1}}u,\cdots,\partial_{x_{N}}u),
\end{eqnarray*}
is the $p$-Laplacian operator.

It is well known that equation (\ref{eq: Local Object}) is the Euler-Lagrange
equation of the energy functional $J:W_{0}^{1,p}(B)\to\R$ defined
as
\begin{eqnarray*}
J(u)=\frac{1}{p}\int_{B}\left(|\na u|^{p}-\frac{\mu}{|x|^{p}}|u|^{p}-\la|u|^{p}\right)dx-\frac{1}{p^{*}(s)}\int_{B}\frac{|u|^{p^{*}(s)}}{|x|^{s}}dx, &  & u\in W_{0}^{1,p}(B),
\end{eqnarray*}
where the Sobolev space $W_{0}^{1,p}(B)$ is the completion of $C_{0}^{\wq}(B)$,
the space of smooth functions with compact support in $B$, in the
seminorm $\|u\|_{W_{0}^{1,p}(B)}=\|\na u\|_{L^{p}(B)}$. All the integrals
in functional $J$ are well defined, due to the Hardy inequality \cite{Garcia-Peral1998,Hardy-Littlewood-Polya}
\begin{eqnarray*}
\left(\frac{N-p}{p}\right)^{p}\int_{B}\frac{|\varphi|^{p}}{|x|^{p}}dx\leq\int_{B}|\nabla\varphi|^{p}dx, &  & \forall\:\varphi\in W_{0}^{1,p}(B),
\end{eqnarray*}
and due to the Caffarelli-Kohn-Nirenberg inequality \cite{Caffarelli-Kohn-Nirenberg1984}
\begin{eqnarray*}
C\left(\int_{B}\frac{|\var|^{p^{*}(s)}}{|x|^{s}}dx\right)^{\frac{p}{p^{*}(s)}}\leq\int_{B}|\nabla\varphi|^{p}dx, &  & \forall\:\varphi\in W_{0}^{1,p}(B),
\end{eqnarray*}
where $C=C(N,p,s)>0$.

We say that a function $u\in W_{0}^{1,p}(B)$ is a weak solution to
equation (\ref{eq: Local Object}), if for all functions $\var\in C_{0}^{\wq}(B)$,
we have
\begin{equation}
\int_{B}\left(|\nabla u|^{p-2}\nabla u\cdot\nabla\varphi-\frac{\mu}{|x|^{p}}|u|^{p-2}u\varphi-\la|u|^{p-2}u\varphi\right)dx=\int_{B}\frac{|u|^{p^{*}(s)-2}u}{|x|^{s}}\varphi dx.\label{eq: definition of weak solutions}
\end{equation}
In the following, we will systematically omit the word ``weak''
and simply say that $u$ is a solution to equation (\ref{eq: Local Object}),
meaning (\ref{eq: definition of weak solutions}); a similar convention
for weak solutions to equations in the below.

By Theorem 1.1 of Han \cite{HanPG2005}, we have the following existence
result: Consider equation (\ref{eq: Local Object}) with $s=0$, that
is,
\begin{equation}
\begin{cases}
-\Delta_{p}u-{\displaystyle \frac{\mu}{|x|^{p}}|u|^{p-2}u}=|u|^{p^{*}-2}u+\la|u|^{p-2}u & \text{in }B,\\
u=0 & \text{on }\pa B,
\end{cases}\label{eq: Han}
\end{equation}
where we write $p^{*}=p^{*}(0)$ for simplicity throughout the paper.
Assume that $1<p^{2}<N$ and $0<\mu\le N^{p-1}(N-p^{2})/p^{p}$. Then
for every $\la,$ $0<\la<\la_{1}(\mu)$, there exists at least one
positive solution to equation (\ref{eq: Han}), where $\la_{1}(\mu)$
is defined by
\begin{equation}
\la_{1}(\mu)=\inf_{u\in W_{0}^{1,p}(B)\backslash\{0\}}\frac{\int_{B}\left(|\nabla u|^{p}-\mu|x|^{-p}|u|^{p}\right)dx}{\int_{B}|u|^{p}dx}.\label{eq: first eigenvalue}
\end{equation}
In the case $p=2$, above existence result was also obtained by Jannelli
\cite{Jannelli1999} on more general domains. For more results on
existence of solutions to equation (\ref{eq: Local Object}) and its
variants, we refer to e.g. \cite{BrezisNirenberg,CaoHan2004,CaoPnegYan2012,CaoYan2010,Guedda-Veron1989,Ghoussoub-Yuan2000,HanPG2005,Jannelli1999}.

A very important ingredient in the argument of Han \cite{HanPG2005}
is the following result, which was obtained by Boumediene, Veronica
and Peral \cite{B-V-P2006}: Denote by $\mathcal{D}^{1,p}(\mathbb{R}^{N})$
the completion of $C_{0}^{\wq}(\R^{N})$, the space of smooth functions
in $\R^{N}$ with compact support, in the seminorm $\ensuremath{\|v\|_{\mathcal{D}^{1,p}(\mathbb{R}^{N})}=\|\nabla v\|_{L^{p}(\mathbb{R}^{N})}.}$
Consider the limiting problem
\begin{eqnarray}
-\Delta_{p}u-{\displaystyle \frac{\mu}{|x|^{p}}|u|^{p-2}u}={\displaystyle |u|^{p^{*}-2}u} &  & \text{in }\R^{N},\label{eq: B-V-P-1}
\end{eqnarray}
where $0<\mu<\bar{\mu}$. There is a unique ground state $U\in{\cal D}^{1,p}(\R^{N})$
to equation (\ref{eq: B-V-P-1}), up to a dilation $U^{\tau}=\tau^{-(N-p)/p}U(\cdot/\tau)$,
$\tau>0$. Moreover, $U$ is a positive radial function which satisfies
\begin{eqnarray}
\lim_{|x|\rightarrow0}U(x)|x|^{\gamma_{1}}=C_{1} & \text{and} & \lim_{|x|\rightarrow\infty}U(x)|x|^{\gamma_{2}}=C_{2},\label{est: optimal result of B-V-P-1}
\end{eqnarray}
and
\begin{eqnarray}
\lim_{|x|\rightarrow0}|\nabla U(x)||x|^{\gamma_{1}+1}=|\ga_{1}|C_{1} & \text{and} & \lim_{|x|\rightarrow\infty}|\nabla U(x)||x|^{\gamma_{2}+1}=\ga_{2}C_{2},\label{est: optimal result of B-V-P-2}
\end{eqnarray}
where $C_{1},C_{2}>0$ are constants depending only $N,p$ and $\mu$.

In the estimates (\ref{est: optimal result of B-V-P-1}) and (\ref{est: optimal result of B-V-P-2}),
the exponents $\ga_{1},\ga_{2}$ are defined as follows: Define $\Ga_{\mu}:\R\to\R$
by
\begin{eqnarray}
\Ga_{\mu}(\ga)=(p-1)|\ga|^{p}-(N-p)|\gamma|^{p-2}\ga+\mu, &  & \ga\in\R.\label{eq: definition of Gamma-mu}
\end{eqnarray}
Consider the equation
\begin{eqnarray}
\Ga_{\mu}(\ga)=0, &  & \ga\in\R.\label{eq: definition of ga-1 and ga-2}
\end{eqnarray}
Due to our assumptions on $N,p$ and $\mu$, that is, $1<p<N$ and
$-\wq<\mu<\bar{\mu}=\left((N-p)/p\right)^{p},$ equation (\ref{eq: definition of ga-1 and ga-2})
admits two and only two solutions, denoted by $\ga_{1}$ and $\ga_{2}$,
with $\ga_{1}<\ga_{2}$.

For later use, we note that in the case $0<\mu<\bar{\mu}$, we have
\[
0<\ga_{1}<\frac{N-p}{p}<\ga_{2}<\frac{N-p}{p-1},
\]
and in the case $\mu<0$, we have
\[
\ga_{1}<0<\frac{N-p}{p-1}<\ga_{2}.
\]
In the case $\mu=0$, we have
\begin{eqnarray*}
\gamma_{1}=0 & \text{and} & \ga_{2}=\frac{N-p}{p-1},
\end{eqnarray*}
and in the case  $p=2$, we have
\begin{eqnarray*}
\gamma_{1}=\sqrt{\overline{\mu}}-\sqrt{\overline{\mu}-\mu} & \text{and} & \ga_{2}=\sqrt{\overline{\mu}}+\sqrt{\overline{\mu}-\mu}.
\end{eqnarray*}

A natural question is whether the positive solution obtained by Han
\cite{HanPG2005} to equation (\ref{eq: Han}) is unique. In the case
when $p=2$, the answer is affirmative, see Ramaswamy and Santra \cite{Ramaswamy2013},
where a more general uniqueness result was obtained. In the general
case when $1<p<N$, this question has not yet been fully understood.
In this paper, we give a partial answer to this question. We have
the following uniqueness result.
\begin{thm}
\label{thm: Uniquess to Han} Assume that $1<p^{2}<N$, $0<\mu\le N^{p-1}(N-p^{2})/p^{p}$
and $0<\la<\la_{1}(\mu)$. Then equation (\ref{eq: Han}) admits at
most one positive radial solution in $B$.
\end{thm}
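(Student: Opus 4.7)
The plan is to reduce the equation to a radial ODE, parametrize positive radial solutions by the Hardy-leading coefficient at the origin, and then combine a Pohozaev identity with a Picone-type comparison to rule out two distinct parameters giving solutions that vanish on $\pa B$.

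\textbf{Step 1 (ODE and sharp asymptotics).} For radial $u=u(r)$, equation (\ref{eq: Han}) reduces to
\[
(r^{N-1}|u'|^{p-2}u')' + r^{N-1}\bigl(\mu r^{-p}u^{p-1} + u^{p^{*}-1} + \la u^{p-1}\bigr) = 0,\qquad r\in(0,R),
\]
where $R$ is the radius of $B$. Since the critical and $\la$-terms are lower order than the Hardy term near $r=0$, repeating the ODE analysis underlying (\ref{est: optimal result of B-V-P-1})--(\ref{est: optimal result of B-V-P-2}) for the limit problem (\ref{eq: B-V-P-1}) should yield
\[
u(r)=C_{u}r^{-\ga_{1}}(1+o(1)),\qquad |u'(r)|=\ga_{1}C_{u}r^{-\ga_{1}-1}(1+o(1))\qquad\text{as }r\to 0^{+}
\]
for some $C_{u}>0$, where $\ga_{1}$ is the smaller root of $\Ga_{\mu}$. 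After the Emden--Fowler change of variables $v(t)=r^{\ga_{1}}u(r)$, $t=\log r$, standard Cauchy--Lipschitz theory identifies each positive radial solution uniquely with $C_{u}=\lim_{r\to 0}r^{\ga_{1}}u(r)$.

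\textbf{Step 2 (Pohozaev identity).} Multiplying the radial ODE by $ru'+\frac{N-p}{p}u$ and integrating on $(\ep,R)$, the critical and Hardy contributions both vanish in the bulk (the former because $p^{*}$ is scaling-critical, the latter because its Pohozaev coefficient is identically zero), leaving only a $\la$-term and boundary contributions. The leading boundary contribution at $r=\ep$ is proportional to $\Ga_{\mu}(\ga_{1})\,C_{u}^{p}\ep^{N-p\ga_{1}-p}$, which vanishes by virtue of (\ref{eq: definition of ga-1 and ga-2}); subleading terms vanish as $\ep\to 0$. We arrive at the sharp identity
\[
\frac{(p-1)R^{N}}{p}|u'(R)|^{p}=\la\int_{0}^{R}r^{N-1}u(r)^{p}\,dr,
\]
coupling the boundary flux to the $L^{p}$-mass.

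\textbf{Step 3 (uniqueness) and main obstacle.} Assume toward a contradiction that $u_{1}\ne u_{2}$ are two positive radial solutions, with $C_{u_{1}}\ne C_{u_{2}}$. I would apply the Picone identity for $-\Delta_{p}$ to the pair $(u_{1},u_{2})$, testing the equation for $u_{1}$ against $u_{2}^{p}/u_{1}^{p-1}$ and symmetrically, to produce a nonnegative bilinear remainder; combined with the Pohozaev identity of Step 2 and the coercivity furnished by $0<\la<\la_{1}(\mu)$, this should force $u_{1}\equiv u_{2}$. The main obstacle is that, unlike the semilinear case $p=2$ treated by Ramaswamy--Santra, the $p$-Laplacian does not linearize under subtraction, so the Picone substitute must be applied to test functions that blow up at the origin (since $\ga_{1}>0$ throughout our hypothesis range). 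Controlling the boundary contributions at $r=0^{+}$ in the integrated Picone identity requires the sharp asymptotics of Step 1, and matching these cancellations with those of the Pohozaev identity in Step 2 --- while invoking the bound $\mu\le N^{p-1}(N-p^{2})/p^{p}$ to ensure $\ga_{1}$ is small enough for the relevant singular integrands to be absolutely convergent --- is the technical crux.
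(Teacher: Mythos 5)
Your Steps 1 and 2 are sound and correspond to genuine ingredients of the paper's argument (the sharp asymptotics are Theorem \ref{thm: Asym. beha. 1}, and your boundary identity is the Pohozaev identity (\ref{eq: Pohozaev}) evaluated at $r=1$). The genuine gap is Step 3, which you yourself flag as the crux: the Picone/Diaz--Saa device fails here. Testing the equation for $u_{1}$ against $u_{2}^{p}/u_{1}^{p-1}$, symmetrizing, and substituting the weak formulations yields $\int_{B}\bigl(u_{1}^{p^{*}-p}-u_{2}^{p^{*}-p}\bigr)\bigl(u_{1}^{p}-u_{2}^{p}\bigr)dx\ge0$, whose integrand is pointwise nonnegative anyway because $t\mapsto t^{p^{*}-p}$ is \emph{increasing}; for a superlinear nonlinearity the inequality is vacuous, and no coercivity from $\la<\la_{1}(\mu)$ rescues it. In the paper a Picone-type comparison enters only in the \emph{ordered} case $u\ge v$ (Proposition \ref{prop: Monotonicity of solutions}, via the eigenvalue comparison of Lemma \ref{lem: eigenvalue comparison}); it cannot by itself handle two unordered solutions.

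The missing idea is the Adimurthi--Yadava scaling argument, which also requires the Pohozaev identity at \emph{interior} radii, not only at $r=1$: at an interior radius the boundary contributions $\frac{\mu}{p}u(r)^{p}r^{N-p}$ and $\frac{1}{p^{*}}u(r)^{p^{*}}r^{N}$ do not vanish and must be retained, exactly as in (\ref{eq: Pohozaev}). One sets $w=u/v$, uses your Step 1 asymptotics at $r=0$ and a Hopf-type lemma at $r=1$ to extend $w$ continuously and positively to $[0,1]$, and puts $\al=\inf w$. If $\al\ge1$ one is in the ordered case; if $\al<1$ and the infimum were attained at some $r_{\al}\in(0,1]$, then $u>\al v$ on $(0,r_{\al})$, $u(r_{\al})=\al v(r_{\al})$ and $u'(r_{\al})=\al v'(r_{\al})$, and subtracting the two Pohozaev identities at $r=r_{\al}$ leaves the strictly negative quantity $\frac{r_{\al}^{N}}{p^{*}}\bigl(\al^{p^{*}}-\al^{p}\bigr)v(r_{\al})^{p^{*}}$ equal to the strictly positive quantity $\la\int_{0}^{r_{\al}}\bigl(u^{p}-\al^{p}v^{p}\bigr)t^{N-1}dt$, a contradiction. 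Hence the infimum is attained only at $r=0$, forcing $C_{u}<C_{v}$, and exchanging the roles of $u$ and $v$ gives $C_{v}<C_{u}$, finishing the proof. Finally, the bound $\mu\le N^{p-1}(N-p^{2})/p^{p}$ plays no role in uniqueness (it is needed only for Han's existence theorem); the paper's Theorem \ref{thm: Uniqueness for local case } covers all $-\wq<\mu<\bar{\mu}$, so your closing concern about keeping $\ga_{1}$ small is a red herring.
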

We remark that in the case $p=2$, positive solutions to equation
(\ref{eq: Han}) with $0<\mu<\bar{\mu}$ and $0<\la$ are radial by
Lemma 3.1 of Ramaswamy and Santra \cite{Ramaswamy2013}, while in
the general case $1<p<N$, $p\ne$2, the symmetry of positive solutions
to equation (\ref{eq: Han}) with $0<\mu<\bar{\mu}$ and $0<\la$
seems to be unknown.

Note that the result of Theorem \ref{thm: Uniquess to Han} dose not
cover the full range of the parameters $p,\mu$ and $\la$. In this
paper, we will prove the uniqueness of positive radial weak solutions
to equation (\ref{eq: Local Object}) in the full range of parameters
of $p,\mu,s$ and $\la$, that is, $1<p<N$, $-\wq<\mu<\bar{\mu}=((N-p)/p)^{p}$,
$0\le s<p$ and $\la\in\R$. The motivation for us to consider the
full range of these parameters is due to the fact that, different
ranges of these parameters have been considered extensively in the
literature. Examples will be given in the below.

We also consider the following limiting problem
\begin{eqnarray}
-\Delta_{p}u-{\displaystyle \frac{\mu}{|x|^{p}}|u|^{p-2}u}={\displaystyle \frac{|u|^{p^{*}(s)-2}u}{|x|^{s}}} &  & \text{in }\R^{N}.\label{eq: Global Object}
\end{eqnarray}
It is easy to see that equation (\ref{eq: B-V-P-1}) is a special
case of equation (\ref{eq: Global Object}). A function $u\in{\cal D}^{1,p}(\R^{N})$
is a (weak) solution to equation (\ref{eq: Global Object}), if for
every $\var\in C_{0}^{\wq}(\R^{N})$, we have
\[
\int_{\mathbb{R}^{N}}\left(|\nabla u|^{p-2}\nabla u\cdot\nabla\varphi-\frac{\mu}{|x|^{p}}|u|^{p-2}u\varphi\right)dx=\int_{\mathbb{R}^{N}}\frac{|u|^{p^{*}(s)-2}u}{|x|^{s}}\varphi dx.
\]
We will only consider positive radial weak solutions to equation (\ref{eq: Global Object}).
In the following we discuss positive radial weak solutions to equation
(\ref{eq: Local Object}) and equation (\ref{eq: Global Object})
respectively.

\subsection{Uniqueness of positive radial weak solutions to equation (\ref{eq: Local Object})}

In this subsection, we consider equation (\ref{eq: Local Object}).
We are concerned with the uniqueness of positive radial solutions
to equation (\ref{eq: Local Object}). Uniqueness problems have been
considered extensively in the literature. We refer the reader to e.g.
\cite{AdimurthiYadava1994,F-L-Serrin-2009,Kwong1989,Ramaswamy2013,Srikanth1993,ZhangLQ1992},
where more general nonlinear elliptic equations were studied.

When $p=2$, equation (\ref{eq: Local Object}) is reduced to
\begin{equation}
\begin{cases}
-\Delta u-{\displaystyle \frac{\mu}{|x|^{2}}u}={\displaystyle \frac{|u|^{2^{*}-2}u}{|x|^{s}}}+\la u & \text{in }B,\\
u=0 & \text{on }\pa B.
\end{cases}\label{eq: Ramaswamy-Santra}
\end{equation}
When $\la\le0$, it is standard to prove that equation (\ref{eq: Ramaswamy-Santra})
admits no positive solution by Pohozaev identity \cite{Pohozaev-1965}.
When $0\le\mu<\bar{\mu}=(N-2)^{2}/4$, $s=0$ and $\la>0$, it is
well known \cite{GNN,Ramaswamy2013} that positive solutions to equation
(\ref{eq: Ramaswamy-Santra}) are radial. When $\mu=s=0$ and $\la>0$,
the uniqueness of positive solutions to equation (\ref{eq: Ramaswamy-Santra})
was proved by Zhang \cite{ZhangLQ1992} and Srikanth \cite{Srikanth1993},
while for $0<\mu<\bar{\mu}=(N-2)^{2}/4$, $s=0$ and $\la>0$, the
uniqueness for positive solutions to equation (\ref{eq: Ramaswamy-Santra})
was proved by Ramaswamy and Santra \cite{Ramaswamy2013}. The ideas
of \cite{Ramaswamy2013,Srikanth1993,ZhangLQ1992} are to prove that
positive radial solutions are non-degenerate. We refer the reader
to \cite{Ramaswamy2013,Srikanth1993,ZhangLQ1992} for the precise
meaning of non-degenerate solutions.

In the general case $1<p<N$, among other results, Adimurthi and Yadava
\cite{AdimurthiYadava1994} proved the uniqueness of positive radial
solutions to the following prototype of equation (\ref{eq: Local Object})
\begin{equation}
\begin{cases}
-\Delta_{p}u=|u|^{p^{*}-2}u+\la|u|^{p-2}u & \text{in }B,\\
u=0 & \text{on }\pa B,
\end{cases}\label{eq: reduced equ.}
\end{equation}
where $\la\in\R$. The approach of Adimurthi and Yadava \cite{AdimurthiYadava1994},
roughly speaking, is as follows: Suppose that $u$ and $v$ are two
positive radial solutions to equation (\ref{eq: reduced equ.}). If
$u\ge v$ or $v\ge u$ in $B$, then it can be proved easily that
$u\equiv v$ in $B$. If $u\not\equiv v$ in $B$, then $u/v$ is
a positive continuous function on $\bar{B}$, the closure of $B$.
Then $0<\min_{\bar{B}}(u/v)<1$ and $0<\min_{\bar{B}}(v/u)<1$. They
excluded both cases by virtue of a generalized Pohozaev-type identity
from Ni and Serrin \cite{Ni-Serrin-1985-1,Ni-Serrin-1986-2} or Pucci
and Serrin \cite{Pucci-Serrin-1986}.

In the present paper, we follow the idea of Adimurthi and Yadava \cite{AdimurthiYadava1994}.
We obtain the following uniqueness result.
\begin{thm}
\label{thm: Uniqueness for local case } Assume that $1<p<N$, $-\wq<\mu<\bar{\mu}=((N-p)/p)^{p}$
and $0\le s<p$. If $\la\le0$, then equation (\ref{eq: Local Object})
admits no positive radial solution in $B$. If $\la>0$, then equation
(\ref{eq: Local Object}) admits at most one positive radial solution
in $B$.
\end{thm}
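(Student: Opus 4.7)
The plan is to follow the Adimurthi--Yadava strategy sketched in the text, combining a radial Pohozaev-type identity with a quotient-based comparison argument.

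\emph{Radial Pohozaev identity and non-existence.} For any positive radial solution $u=u(r)$ the equation reduces to $-(r^{N-1}|u'|^{p-2}u')'=\mu r^{N-1-p}u^{p-1}+r^{N-1-s}u^{p^{*}(s)-1}+\la r^{N-1}u^{p-1}$ on $(0,R)$. I would multiply by the Pucci--Serrin-type multiplier $ru'+\tfrac{N-p}{p}u$ and integrate from $0$ to $r$. Because $\tfrac{N-s}{p^{*}(s)}=\tfrac{N-p}{p}$, the bulk integrals of $|u'|^{p}$, of the Hardy term and of the critical Hardy--Sobolev term all cancel, producing the pointwise identity
\[
Q(r,u):=\tfrac{(p-1)r^{N}}{p}|u'|^{p}+\tfrac{N-p}{p}r^{N-1}|u'|^{p-2}u'u+\tfrac{\mu r^{N-p}}{p}u^{p}+\tfrac{r^{N-s}}{p^{*}(s)}u^{p^{*}(s)}+\tfrac{\la r^{N}}{p}u^{p}=\la\int_{0}^{r}s^{N-1}u^{p}\,ds,
\]
whose boundary contribution at $r=0$ is controlled by the asymptotics $u(r)\sim C_{1}r^{-\ga_{1}}$, $|u'(r)|\sim|\ga_{1}|C_{1}r^{-\ga_{1}-1}$ (requiring $\ga_{1}<(N-p)/p$, which holds throughout $\mu<\bar{\mu}$). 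Evaluating at $r=R$ with $u(R)=0$ collapses the identity to $\tfrac{(p-1)R^{N}}{p}|u'(R)|^{p}=\la\int_{0}^{R}s^{N-1}u^{p}\,ds$. If $\la<0$ the two sides have opposite signs, and if $\la=0$ the left-hand side must vanish, contradicting the Hopf-type strict inequality $|u'(R)|>0$ available for positive solutions to the $p$-Laplace equation.

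\emph{Uniqueness via the quotient $u/v$.} For $\la>0$, let $u,v$ be two positive radial solutions. The common singular profile at the origin and the Hopf estimate at $\pa B$ together imply that $u/v$ extends to a positive continuous function on $\bar{B}$. If $u\geq v$ or $v\geq u$ on $B$, a direct weak-comparison argument forces $u\equiv v$; otherwise, by symmetry, $m:=\min_{\bar{B}}(u/v)<1$ is attained at some $r_{0}\in[0,R]$. When $r_{0}\in(0,R)$, the identity $u(r_{0})/v(r_{0})=m$ together with $(u/v)'(r_{0})=0$ yields $u(r_{0})=m v(r_{0})$ and $u'(r_{0})=m v'(r_{0})$; inserting these in $Q(r_{0},u)$ shows that every $p$-homogeneous term rescales by $m^{p}$ while the Hardy--Sobolev term rescales by $m^{p^{*}(s)}$, hence
\[
Q(r_{0},u)-m^{p}Q(r_{0},v)=\tfrac{r_{0}^{N-s}v(r_{0})^{p^{*}(s)}}{p^{*}(s)}\bigl(m^{p^{*}(s)}-m^{p}\bigr)<0
\]
since $0<m<1$ and $p^{*}(s)>p$. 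On the other hand, the integral form of the Pohozaev identity combined with $u\geq m v$ on $[0,r_{0}]$ forces the same quantity to equal $\la\int_{0}^{r_{0}}s^{N-1}(u^{p}-m^{p}v^{p})\,ds\geq0$, a contradiction. The endpoint $r_{0}=R$ is handled analogously: L'Hopital gives $u'(R)=m v'(R)$, the full identity at $R$ then forces $u\equiv m v$ on $[0,R]$, and comparing the equations satisfied by $u$ and $m v$ yields $m^{p-p^{*}(s)}=1$, i.e.\ $m=1$.

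\emph{The main obstacle.} The most delicate case is $r_{0}=0$, because $u/v\to m$ at the origin together with the asymptotics of Boumediene--Veronica--Peral force $C_{1}(u)=m C_{1}(v)$; under this relation both sides of $Q(r,u)-m^{p}Q(r,v)=\la\int_{0}^{r}s^{N-1}(u^{p}-m^{p}v^{p})\,ds$ vanish to leading order as $r\to0^{+}$, so the clean algebraic argument used for interior $r_{0}$ does not close immediately. I expect to handle this by pushing the asymptotic expansion of $u$ and $v$ one order further at the origin so as to extract the sign of the first non-vanishing term in the Pohozaev difference, or by reducing it to the interior case through a monotonicity analysis of $u/v$ on a neighborhood of $0$ that shows the infimum cannot be attained only at the origin unless $u\equiv m v$ globally.
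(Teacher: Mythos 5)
Your outline follows the same route as the paper's proof -- the radial Pohozaev identity, the sign argument for $\lambda\le 0$, the continuity of $w=u/v$ on $[0,1]$ via the asymptotics at the origin and the non-vanishing of $u'(1)$, and the contradictions when the minimum of $w$ is attained at an interior point or at the boundary -- but the case you single out as ``the main obstacle,'' namely the infimum of $u/v$ being attained at the origin, is exactly the case that survives your elimination, and neither of the two fixes you sketch (a higher-order expansion at $0$, or a local monotonicity analysis of $u/v$) is carried out, so as written the proof does not close. The paper resolves this case not by refining the asymptotics but by a symmetry trick your proposal misses: the interior and boundary arguments show that the first point where $w$ attains its infimum $\alpha=\min_{[0,1]}w<1$ must be $r=0$, hence $w(0)=C_u/C_v=\alpha<1$, where $C_u,C_v$ are the constants in $u(r)\sim C_u r^{-\gamma_1}$, $v(r)\sim C_v r^{-\gamma_1}$ from Theorem~\ref{thm: Asym. beha. 1}; running the identical argument on $\widetilde w=v/u$ (whose infimum is also $<1$, since otherwise $v\ge u$ and the comparison step gives $u\equiv v$) yields $C_v/C_u<1$, and the two strict inequalities $C_u<C_v$ and $C_v<C_u$ contradict each other. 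No further expansion of the Pohozaev difference near $r=0$ is needed.

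Two smaller points. For $\lambda=0$ you invoke a ``Hopf-type strict inequality'' $|u'(1)|>0$; since the $p$-Laplacian is degenerate this deserves justification, and the paper supplies an elementary self-contained proof (Lemma~\ref{lem: Picard uniqueness }) showing via a Gronwall argument that $u(1)=u'(1)=0$ together with the growth bound on the nonlinearity forces $u\equiv 0$ near $r=1$ -- the same lemma is what gives $u'(1),v'(1)<0$ when you extend $u/v$ to $r=1$. Likewise, the step ``if $u\ge v$ then $u\equiv v$'' is not a routine weak comparison for the operator $-\Delta_p-\mu|x|^{-p}|\cdot|^{p-2}$; the paper derives it from a first-eigenvalue comparison with the weights $u^{p^*(s)-p}r^{-s}+\lambda$ and $v^{p^*(s)-p}r^{-s}+\lambda$ (Proposition~\ref{prop: Monotonicity of solutions} together with Lemma~\ref{lem: eigenvalue comparison}), and this is where the hypothesis $\lambda>0$ enters to make the weights positive.
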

To follow the idea of Adimurthi and Yadava \cite{AdimurthiYadava1994},
first we establish a generalized Pohozaev-type identity for solutions
to equation (\ref{eq: Local Object}). This is done by combining the
generalized Pohozaev-type identity \cite{Ni-Serrin-1985-1,Ni-Serrin-1986-2,Pucci-Serrin-1986}
together with some apriori estimates on positive radial solutions
to equation (\ref{eq: Local Object}). Then we show that $u/v$ is
a positive continuous function on $\bar{B}$, if $u$ and $v$ are
two positive radial solutions to equation (\ref{eq: Local Object}).
This is done by a precise estimate on the asymptotic behavior of $u(x)$
and $v(x)$ as $x\to0$. Finally, we prove that $u\equiv v$ in the
same way as that of Adimurthi and Yadava \cite{AdimurthiYadava1994}.
Therefore the following estimates on the asymptotic behavior of positive
radial solutions to equation (\ref{eq: Local Object}) play a key
role in our argument.
\begin{thm}
\label{thm: Asym. beha. 1} Assume that $1<p<N$, $-\wq<\mu<\bar{\mu}=((N-p)/p)^{p}$,
$0\le s<p$ and $\la\in\R$. Let $u\in W_{0}^{1,p}(B)$ be a positive
radial solution to equation (\ref{eq: Local Object}). There exists
a constant $C>0$ such that
\begin{eqnarray*}
\lim_{|x|\to0}u(x)|x|^{\ga_{1}}=C & \text{ and } & \lim_{|x|\rightarrow0}|\nabla u(x)||x|^{\gamma_{1}+1}=|\ga_{1}|C.
\end{eqnarray*}
Here $\gamma_{1}$ is defined as in (\ref{eq: definition of ga-1 and ga-2}).
\end{thm}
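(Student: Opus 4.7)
The plan is to reduce the PDE to a radial ODE, perform an Emden--Fowler change of variables, and then analyse the resulting almost-autonomous ODE on a half-line. Since $u$ is positive and radial, standard $C^{1}$-regularity for the $p$-Laplacian gives $u\in C^{1}(B\setminus\{0\})$ and the equation reads
\[
(r^{N-1}|u'|^{p-2}u')' + r^{N-1}\!\left(\frac{\mu\,u^{p-1}}{r^{p}}+\frac{u^{p^{*}(s)-1}}{r^{s}}+\la\,u^{p-1}\right)=0,\qquad r\in(0,R),
\]
with $R$ the radius of $B$. The preliminary step is a crude two-sided bound $c\le r^{\ga_{1}}u(r)\le C$ for small $r>0$. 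The upper bound is obtained by a dyadic rescaling/Moser iteration on the annuli $A_{k}=\{2^{-k-1}\le r\le 2^{-k}\}$: setting $u_{k}(y)=2^{-k\ga_{1}}u(2^{-k}y)$ makes $u_{k}$ a solution of a small perturbation of the limiting equation (\ref{eq: B-V-P-1}) on $A_{0}$, in which the critical CKN source and the $\la$-term carry positive-exponent prefactors $2^{-k\de_{1}}$ and $2^{-kp}$ (positivity of $\de_{1}$ uses $\ga_{1}<(N-p)/p$ together with $s<p$). A uniform $L^{\infty}$--$L^{p^{*}(s)}$ estimate then bounds $\|u_{k}\|_{L^{\infty}(A_{0})}$ by a constant independent of $k$. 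The lower bound follows by comparison with the explicit homogeneous radial profile $r^{-\ga_{1}}$, which solves $-\Delta_{p}v-\mu|x|^{-p}v^{p-1}=0$, combined with Vazquez's strong maximum principle and the fact that $u>0$ on $\overline{B}\setminus\{0\}$.

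With these bounds in hand, set $w(t)=r^{\ga_{1}}u(r)$ and $t=-\log r\in(-\log R,\wq)$. A direct calculation converts the radial ODE into
\[
\mathcal{A}(w,\dot{w},\ddot{w})=E(t,w,\dot{w}),
\]
where $\mathcal{A}$ is an autonomous second-order operator whose positive constant equilibria are precisely the values $w\equiv C>0$ corresponding to the profile $r^{-\ga_{1}}$ of the homogeneous equation $-\Delta_{p}v-\mu|x|^{-p}v^{p-1}=0$, and $E$ collects the transformed critical and $\la$-terms. Each summand of $E$ carries a prefactor $e^{-\de t}$ with $\de>0$, because $\ga_{1}<(N-p)/p$ strictly (equivalently $\Ga_{\mu}'(\ga_{1})<0$) and $s<p$. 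Using the a priori bound, $(w,\dot{w})$ stays in a compact subset of $(0,\wq)\times\R$ for large $t$, so $|E(t,w,\dot{w})|\le Ce^{-\de t}$ along the trajectory.

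The convergence $w(t)\to C>0$ as $t\to\wq$ is obtained via a Lyapunov/LaSalle argument: the operator $\mathcal{A}$ admits a Hamiltonian-type energy $H(w,\dot{w})$ whose derivative along solutions of $\mathcal{A}=E$ is controlled by $|E||\dot{w}|\le C'e^{-\de t}$, so $H(w(t),\dot{w}(t))$ has a finite limit. Compactness of the orbit together with the exponential decay of $E$ confines the $\om$-limit set to a compact invariant set of the autonomous limit system contained in a single level set of $H$; the profile associated with $\ga_{2}$ is excluded because $u\in W_{0}^{1,p}(B)$ is incompatible with the decay rate $r^{-\ga_{2}}$ (since $\ga_{2}>(N-p)/p$), and the zero equilibrium is excluded by the lower bound from the first step. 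Converting $w(t)\to C$ and $\dot{w}(t)\to 0$ back to the original variables yields both stated limits simultaneously. The main obstacle is precisely this convergence step: when $p\ne 2$ the operator $\mathcal{A}$ is genuinely quasilinear and the Hamiltonian is not smooth at $\dot{w}=0$, so the LaSalle argument must be implemented with care (for example, by a regularised energy, or by a direct ODE comparison with the powers $r^{-\ga_{1}\pm\eta}$) to rule out slow oscillatory drift of $w$.
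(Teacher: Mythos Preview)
Your route is quite different from the paper's, and as written it has real gaps at exactly the points you flag as delicate.

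The paper does \emph{not} go through a preliminary two-sided bound or a second-order Emden--Fowler system. It works instead with the scalar first-order quantity
\[
w(r)=-\frac{r^{p-1}|u'(r)|^{p-2}u'(r)}{u(r)^{p-1}},
\]
which satisfies
\[
rw'(r)=\Gamma_{\mu}\bigl(w(r)^{1/(p-1)}\bigr)+u(r)^{p^{*}(s)-p}r^{\,p-s}+\la r^{p}.
\]
The only a priori input used is the weak radial-Sobolev decay $r^{(N-p)/p}u(r)\to0$, which makes the last two terms $o(1)$. Existence of $\lim_{r\to0}w(r)$ is obtained by an elementary local-extrema argument (any oscillation would force $\Ga_{\mu}$ to vanish at two distinct values in $[0,(N-p)/p]$, which is impossible), and the limit is identified with $\ga_{1}^{p-1}$ because $\ga_{2}>(N-p)/p$ would contradict $u\in L^{p^{*}}$. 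A one-line bootstrap then gives $w(r)-\ga_{1}^{p-1}=O(r^{\de})$, and both limits in the theorem follow by integrating $-ru'/u\to\ga_{1}$. No Moser iteration, no comparison principle, no Hamiltonian.

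On your plan: (i) the Moser step hides a missing uniform starting norm. With the scaling exponent $\ga_{1}$ one has, for any natural choice of norm,
\[
\|u_{k}\|_{L^{p^{*}(s)}(|y|^{-s};A_{0})}^{p^{*}(s)}=2^{\,k(N-s-p^{*}(s)\ga_{1})}\!\int_{A_{k}}|x|^{-s}u^{p^{*}(s)}dx,
\]
and $N-s-p^{*}(s)\ga_{1}=(N-s)\bigl(1-\tfrac{p\ga_{1}}{N-p}\bigr)>0$, so finiteness of the global CKN energy does \emph{not} give a bound uniform in $k$; the same positive exponent appears if you start from $\|\na u_{k}\|_{L^{p}}$. (ii) The lower bound by comparison with $cr^{-\ga_{1}}$ needs $u$ to be a supersolution of the homogeneous Hardy equation near~$0$, i.e.\ $|x|^{-s}u^{p^{*}(s)-1}+\la u^{p-1}\ge0$ there; for $\la<0$ this is precisely a lower bound on $u$, so the argument is circular. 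You cannot sidestep this by assuming $\la>0$: the theorem is stated for all $\la\in\R$ and is \emph{used} (via the Pohozaev identity) in the later proof that rules out $\la\le0$. (iii) The LaSalle/energy step for $p\ne2$ is left open; the non-smoothness of the Hamiltonian at $\dot w=0$ is exactly where a naive invariance-principle argument can fail to exclude slow drift, and your proposed fixes (regularised energy, barrier comparison with $r^{-\ga_{1}\pm\eta}$) are not carried out.

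The moral is that passing to the logarithmic-derivative variable $w=(-ru'/u)^{p-1}$ collapses the problem to a single first-order ODE whose autonomous part is $\Ga_{\mu}(w^{1/(p-1)})$; this bypasses all three difficulties above.
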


\subsection{Classification of positive radial weak solutions to equation (\ref{eq: Global Object})}

Now we move to equation (\ref{eq: Global Object}). Note that equation
(\ref{eq: Global Object}) is invariant under the dilation
\begin{equation}
u^{\tau}(x)=\tau^{\frac{N-p}{p}}(\tau x)\label{eq: dilation}
\end{equation}
for $\tau>0$. That is, if $u$ is a solution to equation (\ref{eq: Global Object}),
then so is $u^{\tau}$. In the case $\mu=s=0$, equation (\ref{eq: Global Object})
is also invariant under translations. Taking into account the invariance
of equation (\ref{eq: Global Object}) with respect to (\ref{eq: dilation}),
we are concerned with the classification of positive radial solutions
(with respect to the origin) to equation (\ref{eq: Global Object})
in the Sobolev space ${\cal D}^{1,p}(\R^{N})$.

In many cases, exact forms of positive radial solutions to equation
(\ref{eq: Global Object}) in ${\cal D}^{1,p}(\R^{N})$ are known.
When $p=2$, equation (\ref{eq: Global Object}) is reduced to
\begin{eqnarray}
-\Delta u-\frac{\mu}{|x|^{2}}u=\frac{|u|{}^{2^{*}(s)-2}u}{|x|^{s}} &  & \text{in }\R^{N}.\label{eq: Catrina-Wang}
\end{eqnarray}
Assume that $-\wq<\mu<\bar{\mu}$ and $0\le s<2$. By Proposition
2.6 and Proposition 8.1 of Catrina and Wang \cite{Catrina-Wang2001},
every positive radial solution $u\in{\cal D}^{1,2}(\R^{N})$ to equation
(\ref{eq: Catrina-Wang}) are of the form
\[
u(x)=U_{2,\mu,s}^{\tau}(x)=\tau^{(N-2)/2}U_{2,\mu,s}(\tau x),
\]
for $\tau>0$, where
\[
U_{2,\mu,s}(x)=c_{2,\mu,s}\left(|x|^{\frac{2-s}{2}\left(1-\nu_{\mu}\right)}+|x|^{\frac{2-s}{2}\left(1+\nu_{\mu}\right)}\right)^{-\frac{N-2}{2-s}}
\]
with
\begin{eqnarray*}
\nu_{\mu}=\sqrt{1-\frac{\mu}{\bar{\mu}}} & \text{and} & c_{2,\mu,s}=\left(\frac{4(N-s)(\bar{\mu}-\mu)}{N-2}\right)^{\frac{N-2}{2(2-s)}}.
\end{eqnarray*}
For some special cases of equation (\ref{eq: Catrina-Wang}), the
explicit formula of $U_{2,\mu,s}$ was also obtained by many other
authors. We refer the reader to Aubin \cite{Aubin1976} and Talenti
\cite{Talent1976} for the case $\mu=s=0$, Lieb \cite{Lieb1983}
for the case $\mu=0$ and $0\le s<2$, Terracini \cite{Terracini}
for the case $0\le\mu<\bar{\mu}$ and $s=0$, and Chou-Chu \cite{ChouKS1993}
for the case $0\le\mu<\bar{\mu}$ and $0\le s<2$.

when $1<p<N$ and $\mu=0$, equation (\ref{eq: Global Object}) is
reduced to
\begin{eqnarray}
-\Delta_{p}u=\frac{|u|{}^{p^{*}(s)-2}u}{|x|^{s}} &  & \text{in }\R^{N}.\label{eq: Ghoussoub-Yuan}
\end{eqnarray}
Ghoussoub and Yuan \cite{Ghoussoub-Yuan2000} proved that all positive
radial solutions $u\in{\cal D}^{1,p}(\R^{N})$ to equation (\ref{eq: Ghoussoub-Yuan})
are of the form
\[
u(x)=U_{p,0,s}^{\tau}(x)=\tau^{(N-p)/p}U_{p,0,s}(\tau x),
\]
for $\tau>0$ (when $s=0$, it is also invariant with respect to translations),
where
\[
U_{p,0,s}(x)=c_{p,0,s}\left(1+|x|^{\frac{p-s}{p-1}}\right)^{-\frac{N-p}{p-s}},
\]
with
\[
c_{p,0,s}=\left((N-s)\left(\frac{N-p}{p-1}\right)^{p-1}\right)^{\frac{N-p}{p(p-s)}}.
\]
 In the case when $s=0$, above exact form was also obtained by Guedda
and V\'eron \cite{GueddaVeron1988}.

In the general case when $1<p<N$, $-\wq<\mu<\bar{\mu}$ $(\mu\ne0)$
and $0\le s<p$, the exact form for positive radial solutions to equation
(\ref{eq: Global Object}) in ${\cal D}^{1,p}(\R^{N})$ seems to be
unknown. In the particular case $1<p<N$, $0<\mu<\bar{\mu}$ and $s=0$,
that is, consider equation (\ref{eq: B-V-P-1}). Boumediene, Veronica
and Peral \cite{B-V-P2006} proved the uniqueness of positive radial
solutions to equation (\ref{eq: B-V-P-1}) in ${\cal D}^{1,p}(\R^{N})$,
up to a dilation (\ref{eq: dilation}). Moreover, they showed that
if $u\in{\cal D}^{1,p}(\R^{N})$ is a positive radial solution to
equation (\ref{eq: B-V-P-1}), then $u$ satisfies the estimates (\ref{est: optimal result of B-V-P-1})
and (\ref{est: optimal result of B-V-P-2}).

In this paper, we follow the argument of Boumediene, Veronica and
Peral \cite{B-V-P2006} and extend their uniqueness result to the
general case. We obtain the following result.
\begin{thm}
\label{thm: Uniqueness up to dilation}Assume that $1<p<N$, $-\wq<\mu<\bar{\mu}=((N-p)/p)^{p}$
and $0\le s<p$. Then up to a dilation (\ref{eq: dilation}), there
exists at most one positive radial solution $u\in{\cal D}^{1,p}(\R^{N})$
to equation (\ref{eq: Global Object}). Moreover, $u$ satisfies the
estimates (\ref{est: optimal result of B-V-P-1}) and (\ref{est: optimal result of B-V-P-2}).
\end{thm}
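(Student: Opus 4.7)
The plan is to follow the strategy of Boumediene, Veronica and Peral~\cite{B-V-P2006}, which handled the case $s=0$, $0<\mu<\bar{\mu}$, and to extend it to the full range $-\wq<\mu<\bar{\mu}$, $0\le s<p$. A positive radial solution $u=u(r)$ with $r=|x|$ of equation~(\ref{eq: Global Object}) satisfies the ODE
\[
-(r^{N-1}|u'|^{p-2}u')' \;=\; \mu\, r^{N-1-p}\, u^{p-1} \;+\; r^{N-1-s}\, u^{p^{*}(s)-1}, \qquad r\in(0,\wq),
\]
and the membership $u\in{\cal D}^{1,p}(\R^{N})$ forces both $\int_{0}^{\wq} r^{N-1}|u'|^{p}\,dr<\wq$ and, via the Caffarelli--Kohn--Nirenberg inequality, $\int_{0}^{\wq} r^{N-1-s}u^{p^{*}(s)}\,dr<\wq$. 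Combined with interior $C^{1,\al}$ regularity for the $p$-Laplacian away from the origin, this yields $u\in C^{1}((0,\wq))$ and the integral decay needed for the analysis at $0$ and $\wq$.

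The main technical step is to prove the two-sided asymptotic estimates~(\ref{est: optimal result of B-V-P-1}) and~(\ref{est: optimal result of B-V-P-2}), now in the full parameter range. The natural tool is an Emden--Fowler change of variable $r=e^{t}$ and a comparison with the unperturbed Hardy equation $-(r^{N-1}|v'|^{p-2}v')' = \mu\, r^{N-1-p}\, v^{p-1}$, whose only power-law solutions are $r^{-\ga_{1}}$ and $r^{-\ga_{2}}$, i.e. the roots of $\Ga_{\mu}$. Since $p^{*}(s)>p$, the critical right-hand side behaves as a subcritical perturbation near $0$ and $\wq$, so a bootstrap based on the finite ${\cal D}^{1,p}$ norm selects the only power-law branches consistent with integrability: $u(r)=O(r^{-\ga_{1}})$ as $r\to 0$ and $u(r)=O(r^{-\ga_{2}})$ as $r\to\wq$. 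The sharp constants $C_{1},C_{2}>0$ are then obtained by inserting trial supersolutions and subsolutions of the form $C(1\pm\ep)r^{-\ga_{i}}$ and applying comparison, and the gradient statements follow by differentiating the ODE and using the $C^{1}$ bound. This is the main obstacle, particularly in the regime $\mu\le 0$ where $\ga_{1}\le 0$ so $u(x)$ does not blow up (or even tends to $0$) as $x\to 0$, which is a qualitative departure from~\cite{B-V-P2006} and requires a careful treatment of boundary contributions in the comparison step.

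Once~(\ref{est: optimal result of B-V-P-1}) and~(\ref{est: optimal result of B-V-P-2}) are in place, uniqueness up to dilation is relatively standard. Given two positive radial solutions $u,v\in{\cal D}^{1,p}(\R^{N})$ with leading constants $C_{1}(u),C_{1}(v)$ at $0$, the dilation $u\mapsto u^{\tau}$ multiplies $C_{1}(u)$ by the factor $\tau^{(N-p)/p-\ga_{1}}$; since $\ga_{1}<(N-p)/p$ in every admissible case, this exponent is strictly positive and one can choose $\tau$ so that $C_{1}(u^{\tau})=C_{1}(v)$. After this normalization, both functions solve the same ODE with the same dominant behaviour at $0$, so $(u^{\tau}-v)(r)\,r^{\ga_{1}}\to 0$ as $r\to 0$. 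A Pohozaev-type identity applied on $B_{R}$ to $u^{\tau}-v$ (or equivalently the quotient/sliding argument of~\cite{B-V-P2006} applied to $u^{\tau}/v$) then yields an identity whose boundary terms vanish as $R\to\wq$ and as the inner radius shrinks to $0$, exactly by virtue of~(\ref{est: optimal result of B-V-P-1}) and~(\ref{est: optimal result of B-V-P-2}); this forces $u^{\tau}\equiv v$ and completes the proof.
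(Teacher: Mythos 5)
Your overall architecture (Emden--Fowler transform, asymptotics first, uniqueness second) matches the paper's, but both of your key steps have genuine gaps. For the asymptotics, a comparison/bootstrap against the pure Hardy equation can at best give two-sided bounds $c\,r^{-\ga_{1}}\le u(r)\le C\,r^{-\ga_{1}}$ near the origin; the statement (\ref{est: optimal result of B-V-P-1}) asserts that the \emph{limit} $\lim_{r\to0}u(r)r^{\ga_{1}}$ exists, and sandwiching by sub/supersolutions $C(1\pm\ep)r^{-\ga_{1}}$ does not produce a convergent constant. The paper obtains the limit by a different mechanism: the transform $y=r^{\de}u$, $z=r^{(p-1)(\de+1)}|u'|^{p-2}u'$ admits a conserved Hamiltonian which the ${\cal D}^{1,p}$ decay forces to vanish identically, and the quantity $H=-|z|^{\frac{1}{p-1}-1}z/y$ then solves the autonomous equation $H'=-\frac{p^{*}(s)-p}{p(p-1)}|H|^{2-p}\Ga_{\mu}(H)$, is strictly increasing from $\ga_{1}$ to $\ga_{2}$, and satisfies $\int_{-\wq}^{0}(H-\ga_{1})\,dt+\int_{0}^{\wq}(\ga_{2}-H)\,dt<\wq$; integrating $y_{\ga}'=(\ga-H)y_{\ga}$ then yields the limits $C_{1},C_{2}$.

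For uniqueness, your proposed device does not work: a Pohozaev identity ``applied to $u^{\tau}-v$'' has no meaning for the nonlinear operator $\De_{p}$, since the difference of two solutions satisfies no usable equation, and vanishing boundary terms do not by themselves force two solutions with the same leading constant at the origin to coincide. The paper instead normalizes both solutions by translation so that $y_{1}$ and $y_{2}$ attain their common maximum $M$ at $t=0$ (whence $(y_{i}(0),z_{i}(0))$ agree), and reduces uniqueness to uniqueness for the scalar initial value problem $H'=f(H)$, $H(0)=\de$. Crucially, in the very parameter range the theorem extends to ($\mu<0$, so that $\ga_{1}<0<\ga_{2}$), the right-hand side $f(\ga)=-\frac{p^{*}(s)-p}{p(p-1)}|\ga|^{2-p}\Ga_{\mu}(\ga)$ fails to be locally Lipschitz at $\ga=0$ (it is merely H\"older for $1<p<2$ and singular for $p>2$), so Picard uniqueness fails exactly where it is needed; the paper must handle this with a separate Gronwall argument on the $(y,z)$ system for $1<p<2$ and a monotonicity-based ODE uniqueness lemma for $p>2$, together with showing that the sign-change points $t_{-}$ of $z_{1}$ and $z_{2}$ coincide. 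Your proposal does not register this difficulty, which is the main new technical content beyond \cite{B-V-P2006}.
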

The paper is organized as follows. We prove Theorem \ref{thm: Uniqueness for local case }
in Section 2 and Theorem \ref{thm: Uniqueness up to dilation} in
Section 3. The proof of Theorem \ref{thm: Asym. beha. 1} is routine
and given in Section 4. Some preliminary results are given in the
Appendixes.

With no loss of generality, we assume throughout this paper that $B$
is the unit ball centered at the origin. By abuse of notation, we
write $u(x)=u(r)$ with $r=|x|$, whenever $u$ is a radial function.

\section{Proof of Theorem \ref{thm: Uniqueness for local case }}

In this section we prove Theorem \ref{thm: Uniqueness for local case }.
The following Pohozaev-type identity will be used in the proof of
Theorem \ref{thm: Uniqueness for local case }.
\begin{lem}
Let $u\in W_{0}^{1,p}(B)$ be a positive radial solution to equation
(\ref{eq: Local Object}). Then for any $0<r\le1$ we have
\begin{equation}
\begin{aligned}\la\int_{0}^{r}u(t)^{p}t^{N-1}dt & =\frac{p-1}{p}|u'(r)|^{p}r^{N}+\frac{N-p}{p}u(r)|u'(r)|^{p-2}u'(r)r^{N-1}\\
 & \quad+\frac{1}{p}\left(\mu r^{N-p}+\la r^{N}\right)u(r)^{p}+\frac{1}{p^{*}(s)}u(r)^{p^{*}(s)}r^{N-s}.
\end{aligned}
\label{eq: Pohozaev}
\end{equation}
\end{lem}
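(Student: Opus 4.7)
My plan is to carry out the standard Pucci--Serrin/Pohozaev-type computation, adapted to the singular coefficients and to the fact that $u$ is controlled near the origin only through the asymptotics of Theorem~\ref{thm: Asym. beha. 1}. First, by the standard radial regularity theory for the $p$-Laplacian, $u$ is a classical solution of the ODE
\[
-(t^{N-1}|u'|^{p-2}u')' = t^{N-1}\Bigl(\frac{\mu}{t^{p}}u^{p-1} + \frac{u^{p^{*}(s)-1}}{t^{s}} + \lambda u^{p-1}\Bigr)
\]
on $(0,1]$. I would then derive two independent integral identities on a shrinking interval $(\varepsilon, r)$ and let $\varepsilon \to 0^{+}$ at the end.

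The first comes from multiplying the equation by $u$ and integrating by parts, which yields
\[
\int_{\varepsilon}^{r} t^{N-1}|u'|^{p}\,dt - \bigl[t^{N-1}u|u'|^{p-2}u'\bigr]_{\varepsilon}^{r} = \mu\int_{\varepsilon}^{r}t^{N-p-1}u^{p}\,dt + \int_{\varepsilon}^{r}t^{N-s-1}u^{p^{*}(s)}\,dt + \lambda\int_{\varepsilon}^{r}t^{N-1}u^{p}\,dt.
\]
The second is the energy identity obtained by multiplying by $t^{N}u'$ and recognising each product as a full $t$-derivative via $(p-1)|u'|^{p-2}u'u'' = \tfrac{1}{p}(|u'|^{p})'$ and $u^{p-1}u' = \tfrac{1}{p}(u^{p})'$. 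One integration by parts in each term leaves
\[
\tfrac{p-1}{p}\bigl[t^{N}|u'|^{p}\bigr]_{\varepsilon}^{r} + \tfrac{N-p}{p}\int_{\varepsilon}^{r}t^{N-1}|u'|^{p}\,dt
\]
on one side and the analogous boundary-plus-integral terms for the $\mu$-, $p^{*}(s)$- and $\lambda$-contributions on the other. Eliminating $\int t^{N-1}|u'|^{p}\,dt$ between the two identities, and then using the algebraic relation $\tfrac{N-s}{p^{*}(s)} = \tfrac{N-p}{p}$ (which follows from $p^{*}(s) = (N-s)p/(N-p)$), causes the $\mu$- and $p^{*}(s)$-integrals to cancel and leaves precisely the single integral $\lambda\int_{0}^{r}t^{N-1}u^{p}\,dt$ together with the boundary terms at $t = r$ that appear in (\ref{eq: Pohozaev}).

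The main obstacle --- and the only nontrivial step --- is to show that every boundary contribution at $t = \varepsilon$ vanishes as $\varepsilon \to 0^{+}$, i.e.
\[
\lim_{t\to 0^{+}}\Bigl(t^{N-1}u|u'|^{p-2}u' + t^{N}|u'|^{p} + t^{N-p}u^{p} + t^{N-s}u^{p^{*}(s)}\Bigr) = 0.
\]
For this I would invoke Theorem~\ref{thm: Asym. beha. 1}: since $u(t) \sim C t^{-\gamma_{1}}$ and $|u'(t)| \sim |\gamma_{1}|C\, t^{-\gamma_{1}-1}$, each of these terms is a constant times $t^{N-p-p\gamma_{1}}$ (the $p^{*}(s)$-term reduces to this using $p^{*}(s) = (N-s)p/(N-p)$). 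Since $\gamma_{1}$ is the smaller root of $\Gamma_{\mu}$, we have $\gamma_{1} < (N-p)/p$, so every exponent is strictly positive and all four limits are $0$. Combined with the cancellations described above, this yields (\ref{eq: Pohozaev}).
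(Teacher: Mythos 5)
Your proposal is correct and follows essentially the same route as the paper: the paper simply cites the Ni--Serrin/Pucci--Serrin variational identity on $(a,r)$ (which is exactly what your two-multiplier computation derives) and then sends $a\to0$, killing the boundary terms via Theorem \ref{thm: Asym. beha. 1} and $\gamma_{1}<(N-p)/p$, just as you do. One cosmetic slip: the term $t^{N-s}u^{p^{*}(s)}$ behaves like $t^{(N-s)(N-p-p\gamma_{1})/(N-p)}$ rather than $t^{N-p-p\gamma_{1}}$, but this exponent is still strictly positive under $\gamma_{1}<(N-p)/p$, so your conclusion is unaffected.
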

\begin{proof}
For any $0<a<r\le1$, we have from the Pohozaev-type variational identity
of Ni and Serrin \cite{Ni-Serrin-1985-1,Ni-Serrin-1986-2} or Pucci
and Serrin \cite{Pucci-Serrin-1986} that
\begin{eqnarray*}
\la\int_{a}^{b}u(t)^{p}t^{N-1}dt & = & \frac{p-1}{p}\left(|u'(r)|^{p}r^{N}-|u'(a)|^{p}a^{N}\right)+\frac{\mu}{p}\left(u(r)^{p}r^{N-p}-u(a)^{p}a^{N-p}\right)\\
 &  & \;+\frac{N-p}{p}\left(u(r)|u'(r)|^{p-2}u'(r)r^{N-1}-u(a)|u'(a)|^{p-2}u'(a)a^{N-1}\right)\\
 &  & \;+\left(\frac{1}{p^{*}(s)}\frac{u(r)^{p^{*}(s)}}{r^{s}}+\frac{\la}{p}u(r)^{p}\right)r^{N}-\left(\frac{1}{p^{*}(s)}\frac{u(a)^{p^{*}(s)}}{a^{s}}+\frac{\la}{p}u(a)^{p}\right)a^{N}.
\end{eqnarray*}
By Theorem \ref{thm: Asym. beha. 1} and the fact that $\ga_{1}<(N-p)/p$,
we obtain (\ref{eq: Pohozaev}) by sending $a\to0$ in above equality.
\end{proof}
We start the proof of Theorem \ref{thm: Uniqueness for local case }
with the following result.
\begin{prop}
\label{prop: Monotonicity of solutions} Assume that $\la>0$. If
$u,v\in W_{0}^{1,p}(B)$ are two positive radial solutions to equation
(\ref{eq: Local Object}) and $u\ge v$ in $B$, then $u\equiv v$
in $B$.\end{prop}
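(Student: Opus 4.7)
The plan is to apply Picone's identity for the $p$-Laplacian, the standard tool for comparison-type uniqueness results for quasilinear equations. Recall that for differentiable $\psi\ge 0$ and $\phi>0$ one has the pointwise inequality
\begin{equation*}
|\na\psi|^{p}\ge |\na\phi|^{p-2}\na\phi\cdot\na\!\left(\frac{\psi^{p}}{\phi^{p-1}}\right),
\end{equation*}
with equality if and only if $\psi$ and $\phi$ are proportional. Applying this with $\psi=v$ and $\phi=u$, I would use $\varphi=v^{p}/u^{p-1}$ as a test function in the weak formulation of (\ref{eq: Local Object}) for $u$, and compare the resulting inequality with the weak form of (\ref{eq: Local Object}) for $v$ tested against $v$ itself.

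First I would verify that $\varphi\in W_{0}^{1,p}(B)$. Near $\pa B$, $C^{1,\al}$ boundary regularity and Hopf's lemma for positive solutions of $p$-Laplace type equations give $u,v\sim 1-|x|$ with nonvanishing radial derivatives, so $\varphi$ vanishes linearly at $\pa B$. Near the origin, Theorem \ref{thm: Asym. beha. 1} provides the matching asymptotics $u\sim c_{u}|x|^{-\ga_{1}}$, $v\sim c_{v}|x|^{-\ga_{1}}$ and $|\na u|,|\na v|\sim|x|^{-\ga_{1}-1}$, hence $|\na\varphi|\sim|x|^{-\ga_{1}-1}$; the integral $\int_{B_{\rho}}|\na\varphi|^{p}dx$ converges because $\ga_{1}<(N-p)/p$. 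To avoid any worry about dividing by $u$ I would work instead with $v^{p}/(u+\ep)^{p-1}$ and pass to the limit $\ep\to 0^{+}$ using dominated convergence. The combination of Picone with the weak formulation of (\ref{eq: Local Object}) for $u$ tested against $\varphi$ then yields
\begin{equation*}
\int_{B}|\na v|^{p}dx\ge \int_{B}\left[\frac{\mu v^{p}}{|x|^{p}}+\frac{u^{p^{*}(s)-p}v^{p}}{|x|^{s}}+\la v^{p}\right]dx,
\end{equation*}
while the weak formulation of (\ref{eq: Local Object}) for $v$ tested against $v$ itself gives the same identity with $u^{p^{*}(s)-p}$ replaced by $v^{p^{*}(s)-p}$ and $\ge$ replaced by $=$. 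Subtracting produces
\begin{equation*}
\int_{B}\frac{v^{p}\bigl(u^{p^{*}(s)-p}-v^{p^{*}(s)-p}\bigr)}{|x|^{s}}\,dx\le 0.
\end{equation*}
Since $p^{*}(s)>p$ and $u\ge v>0$ in $B$, the integrand is pointwise non-negative, so it must vanish a.e. Because $v>0$, this forces $u=v$ a.e.\ in $B$, whence $u\equiv v$ by continuity.

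The main obstacle is the rigorous admissibility of the singular test function $\varphi=v^{p}/u^{p-1}$: the matching leading-order blow-up rate $|x|^{-\ga_{1}}$ of $u$ and $v$ at the origin (together with the matching gradient rate) is exactly what keeps $\varphi$ and $\na\varphi$ integrable there, and this precise matching---not just a one-sided bound---is the nontrivial input from Theorem \ref{thm: Asym. beha. 1}. Note that the assumption $\la>0$ is not used directly in the argument above; it appears in the statement only to make the proposition non-vacuous, since Theorem \ref{thm: Uniqueness for local case } rules out positive radial solutions when $\la\le 0$.
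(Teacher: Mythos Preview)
Your argument via Picone's identity is correct and complete; the care you take with the admissibility of $\varphi=v^{p}/u^{p-1}$ (matching asymptotics at the origin from Theorem~\ref{thm: Asym. beha. 1}, Hopf behaviour at $\partial B$, and the $\epsilon$-regularisation) is exactly what is needed.

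The paper takes a closely related but more packaged route: it observes that $u$ and $v$ are positive solutions of $-L_{p}w=\rho_{i}\,w^{p-1}$ with weights $\rho_{1}=u^{p^{*}(s)-p}|x|^{-s}+\la$ and $\rho_{2}=v^{p^{*}(s)-p}|x|^{-s}+\la$, hence first eigenfunctions with eigenvalue $1$, and then invokes an eigenvalue comparison lemma (Lemma~\ref{lem: eigenvalue comparison}) to conclude that $\rho_{1}\equiv\rho_{2}$. That lemma is itself a variational statement whose proof is essentially the same Picone-type inequality you use, so the two arguments are the same idea in different clothing. Your direct approach has the advantage of making the cancellation of the $\la$-term transparent (so one sees that $\la>0$ is only there to make the statement non-vacuous), whereas the paper's formulation uses $\la>0$ to ensure the weights $\rho_{i}$ are strictly positive so that the first eigenvalue is well defined. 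Conversely, the paper's abstraction into an eigenvalue lemma avoids having to justify the singular test function explicitly, since the comparison is done at the level of Rayleigh quotients.
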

\begin{proof}
Suppose that $u,v\in W_{0}^{1,p}(B)$ are two positive radial solutions
to equation (\ref{eq: Local Object}) and $u\ge v$ in $B$. Set $\rho_{1}=u^{p^{*}(s)-p}r^{-s}+\la$
and $\rho_{2}=v^{p^{*}(s)-p}r^{-s}+\la$. Then $\rho_{1}\ge\rho_{2}>0$
in $B$ by assumptions. Applying Lemma \ref{lem: eigenvalue comparison},
we deduce that $u=v=0$ on the set $\{x\in B:\rho_{1}(x)>\rho_{2}(x)\}$.
Since $u,v$ are positive functions, we have that
\[
\{x\in B:\rho_{1}(x)>\rho_{2}(x)\}=\emptyset.
\]
That is, $\rho_{1}\equiv\rho_{2}$ in $B$. The proof of Proposition
\ref{prop: Monotonicity of solutions} is complete.
\end{proof}
We also need the following lemma.
\begin{lem}
\label{lem: Picard uniqueness } Assume that $1<p<N$. Let $v$ be
a positive solution to equation
\begin{equation}
\begin{cases}
-(r^{N-1}|v^{\prime}(r)|^{p-2}v^{\prime}(r))^{\prime}=g(r,v(r))r^{N-1} & \text{for }\frac{1}{2}<r<1,\\
v(1)=0,
\end{cases}\label{eq: Knaap-Peletier}
\end{equation}
where $g:[1/2,1]\times[0,\wq)\to\R$ is a function satisfying that
\begin{eqnarray}
|g(r,t)|\le C_{0}t^{p-1} &  & \text{for }(r,t)\in\left(\frac{1}{2},1\right)\times(0,\wq).\label{eq: growth condition of g}
\end{eqnarray}
Then
\[
v^{\prime}(1)<0.
\]
\end{lem}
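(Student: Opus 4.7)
The plan is to argue by contradiction and use the integrated form of the ODE to set up a bootstrap estimate that forces $v$ to vanish on a left neighborhood of $r=1$. First note that since $v(1)=0$ and $v>0$ on $(1/2,1)$, the one-sided difference quotient at $r=1$ already forces $v'(1)\le 0$; hence the only case that needs ruling out is $v'(1)=0$.

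Assuming $v'(1)=0$ and integrating (\ref{eq: Knaap-Peletier}) from $r$ to $1$ gives
\[
r^{N-1}|v'(r)|^{p-2}v'(r) \;=\; \int_r^1 g(t,v(t))\, t^{N-1}\,dt, \qquad 1/2<r<1.
\]
Applying the growth bound (\ref{eq: growth condition of g}), using $r^{N-1}\ge 2^{-(N-1)}$, and letting $M(r):=\sup_{[r,1]}v$, I would obtain
\[
|v'(r)|^{p-1}\;\le\; 2^{N-1}C_0(1-r)\,M(r)^{p-1}, \qquad \text{hence}\qquad |v'(r)|\;\le\; C_1(1-r)^{1/(p-1)}\,M(r),
\]
with $C_1:=(2^{N-1}C_0)^{1/(p-1)}$. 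Then from $v(1)=0$ and the fundamental theorem of calculus,
\[
v(r)\;\le\;\int_r^1 |v'(t)|\,dt\;\le\; C_2\, M(r)\,(1-r)^{p/(p-1)}, \qquad C_2:=\tfrac{p-1}{p}\,C_1.
\]

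Now I would take the supremum over $r\in[r_0,1]$ and observe that on this interval $M(r)\le M(r_0)$ and $(1-r)^{p/(p-1)}\le(1-r_0)^{p/(p-1)}$, yielding
\[
M(r_0)\;\le\; C_2(1-r_0)^{p/(p-1)}\,M(r_0).
\]
Choosing $r_0<1$ close enough to $1$ that $C_2(1-r_0)^{p/(p-1)}<1$ then forces $M(r_0)=0$, so $v\equiv 0$ on $[r_0,1]$, which contradicts positivity of $v$ on $(1/2,1)$. The main (and only mild) subtlety is ensuring that $v'(1)$ is well-defined as a one-sided limit and that the integrated form of the ODE is legitimate up to $r=1$; this is routine, since $r^{N-1}$ is bounded away from $0$ on $[1/2,1]$ and the right-hand side of (\ref{eq: Knaap-Peletier}) is continuous, so the quantity $r^{N-1}|v'|^{p-2}v'$ is in $C^1([1/2,1])$ and $v\in C^1([1/2,1])$ by inverting the strictly monotone map $a\mapsto |a|^{p-2}a$.
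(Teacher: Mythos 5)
Your proof is correct and follows essentially the same route as the paper's: assume $v'(1)=0$, integrate the equation from $r$ to $1$, and use the growth bound on $g$ to force $v\equiv 0$ on a left neighborhood of $r=1$, contradicting positivity. The only difference is the closing step, where you absorb $M(r_0)=\sup_{[r_0,1]}v$ by taking $r_0$ close to $1$ so that $C_2(1-r_0)^{p/(p-1)}<1$, whereas the paper derives an integral inequality for $v^p$ and invokes Gronwall's inequality; both are valid and your version is, if anything, slightly cleaner.
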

\begin{proof}
This lemma should be well known. But as we did not find a proper reference,
we give a proof here for completeness. Since $v$ is a positive solution
and $v(1)=0$, then $v^{\prime}(1)\le0.$ Suppose that Lemma \ref{lem: Picard uniqueness }
is not true. That is, we suppose that
\begin{equation}
v^{\prime}(1)=0.\label{eq: star-1-1}
\end{equation}
Integrate each side of equation (\ref{eq: Knaap-Peletier}) from $r$
to $1$. We obtain, by (\ref{eq: star-1-1}), that
\[
r^{N-1}|v^{\prime}(r)|^{p-2}v^{\prime}(r)=\int_{r}^{1}g(s,u(s))s^{N-1}ds,
\]
for all $1/2\le r<1$. It follows from above equality and (\ref{eq: growth condition of g})
that
\[
|v^{\prime}(r)|^{p}\le C(1-r)^{\frac{1}{p-1}}\int_{r}^{1}v^{p-1}d\tau,
\]
for all $1/2\le r<1$. Combine H\"older's inequality and the assumption
that $v(1)=0$. We obtain that
\begin{eqnarray*}
v(r)^{p} & = & p\int_{r}^{1}v(\tau)^{p-1}v^{\prime}(\tau)d\tau\\
 & \le & C\left(\int_{r}^{1}v(\tau)^{p}d\tau\right)^{\frac{p-1}{p}}\left(\int_{r}^{1}|v^{\prime}(\tau)|^{p}d\tau\right)^{\frac{1}{p}}\\
 & \le & C(1-r)^{\frac{1}{p-1}}\int_{r}^{1}v(\tau)^{p}d\tau,
\end{eqnarray*}
for all $1/2\le r<1$. Define $w(s)=v(1-s)$ for $0\le s\le1/2$.
Above equality is equivalent to
\[
w(s)^{p}\le Cs^{\frac{1}{p-1}}\int_{0}^{s}w(\tau)^{p}d\tau
\]
for all $0\le s\le1/2$. Note that $w(0)=0$. It follows from the
Gronwall's inequality that
\begin{eqnarray*}
w(s)=0 &  & \text{for all }0\le s\le\frac{1}{2},
\end{eqnarray*}
which is equivalent to
\begin{eqnarray*}
v(r)=0 &  & \text{for all }\frac{1}{2}\le r\le1.
\end{eqnarray*}
We reach a contradiction, as we assume that $v$ is positive. The
proof of Lemma \ref{lem: Picard uniqueness } is complete.
\end{proof}
Now we prove Theorem \ref{thm: Uniqueness for local case }.

\begin{proof}[Proof of Theorem \ref{thm: Uniqueness for local case }]
We have two cases:

\emph{Case 1: $\la\le0$; }

\emph{Case 2: $\la>0$. }

Consider Case 1. Suppose that $u\in W_{0}^{1,p}(B)$ is a positive
radial solution to equation (\ref{eq: Local Object}). Then $u$ satisfies
the identity (\ref{eq: Pohozaev}). Take $r=1$ in (\ref{eq: Pohozaev}).
We obtain that
\begin{equation}
0\ge\la\int_{a}^{b}u(t)^{p}t^{N-1}dt=\frac{p-1}{p}|u'(1)|^{p}\ge0.\label{eq: case 1}
\end{equation}
If $\la<0$, then $u\equiv0$ in $B$ by (\ref{eq: case 1}). We obtain
a contradiction. If $\la=0$, then $u^{\prime}(1)=0$ by (\ref{eq: case 1}).
Note that $u(1)=0$. Apply Lemma \ref{lem: Picard uniqueness } to
$u$ with $g(r,u)=\left(\mu r^{-p}+u^{p^{*}(s)-p}r^{-s}+\la\right)u^{p-1}$.
We obtain that $u(r)\equiv0$ for $1/2\le r\le1$. We reach a contradiction.
Hence in Case 1, that is, $\la\le0$, there has no positive radial
solution to equation (\ref{eq: Local Object}) in $B$. We remark
that when $\mu=0$, Adimurthi and Yanava \cite{AdimurthiYadava1994}
pointed out that an observation of Knaap and Peletier \cite{Knaap-Peletier1989}
implies that $u(r)\equiv0$ for $0<r\le1$. They also pointed out
that a more general theorem given by Franchi, Lanconelli and Serrin
\cite{F-L-Serrin-2009} also claims that $u(r)\equiv0$ for $0<r\le1$.

Consider Case 2. Suppose that $u,v\in W_{0}^{1,p}(B)$ are two positive
radial solutions to equation (\ref{eq: Local Object}). We prove that
$u\equiv v$ in $B$. Suppose, on the contrary, that $u\not\equiv v$
in $B$. That is,
\begin{eqnarray}
u(r)\not\equiv v(r) &  & \text{for }0<r<1.\label{eq: contradiction assumption}
\end{eqnarray}
Let
\begin{eqnarray*}
w(r)=\frac{u(r)}{v(r)} &  & \text{for }0<r<1.
\end{eqnarray*}
Then $w$ is a positive continuous function in $(0,1)$.

First, we claim that $w$ can be extended to $r=0$ and $r=1$ such
that $w$ is a positive continuous function on $[0,1]$. Indeed, by
Theorem \ref{thm: Asym. beha. 1}, there exist constants $C_{u},C_{v}>0$
such that
\begin{eqnarray*}
\lim_{r\to0}u(r)r^{\ga_{1}}=C_{u} & \text{ and } & \lim_{r\to0}v(r)r^{\ga_{1}}=C_{v}.
\end{eqnarray*}
 Then we have that
\[
\lim_{r\to0}w(r)=\frac{C_{u}}{C_{v}}>0.
\]
Thus we can extend $w$ continuously to $r=0$ by setting $w(0)=C_{u}/C_{v}$.
On the other hand, by L'Hospital's rule, we have that
\[
\lim_{r\to1}w(r)=\frac{u^{\prime}(1)}{v^{\prime}(1)}>0,
\]
since both $u^{\prime}(1)$ and $v^{\prime}(1)$ are negative by Lemma
\ref{lem: Picard uniqueness }. Hence we can extend $w$ continuously
to $r=1$ by setting $w(1)=u^{\prime}(1)/v^{\prime}(1)$. Then $w$
is a positive continuous function on $[0,1]$.

Next, set
\[
\al=\inf_{r\in[0,1]}w(r).
\]
Then $\al>0$. We claim that $\al<1$. Otherwise, if $\al\ge1$, then
$u\ge v$ in $(0,1)$. Proposition \ref{prop: Monotonicity of solutions}
implies that $u\equiv v$ in $(0,1)$. This contradicts to (\ref{eq: contradiction assumption}).
Hence $0<\al<1.$ Since $w$ is continuous on $[0,1]$, $\al$ can
be achieved by $w$ on $[0,1]$. Let $r_{\al}$ be such that
\[
r_{\al}=\inf\{t\in[0,1]:w(t)=\al\}.
\]
We claim that
\begin{equation}
r_{\al}=0.\label{eq: r-alpha=00003D0}
\end{equation}
Otherwise, we have $0<r_{\al}\le1$. If $r_{\al}=1$, that is, $w(1)=\al$
and $w(r)>\al$ for $0\le r<1$. Then we deduce that $u'(1)=\al v'(1)$,
and $u(r)>\al v(r)$ for $0\le r<1$. Take $r=r_{\al}=1$ in (\ref{eq: Pohozaev}).
Since both $u,v$ satisfy (\ref{eq: Pohozaev}), we obtain that
\[
0<\la\int_{0}^{1}\left(u(t)^{p}-\al^{p}v(t)^{p}\right)t^{N-1}dt=\frac{p-1}{p}\left(|u'(1)|^{p}-\al^{p}|v'(1)|^{p}\right)=0.
\]
We reach a contradiction. If $0<r_{\al}<1$, then $w(r_{\al})=\al$
and $w(r)>\al$ for $0\le r<r_{\al}$. Note that $w^{\prime}(r_{\al})=0$.
We deduce that $u>\al v$ in $(0,r_{\al})$, $u(r_{\al})=\al v(r_{\al})$
and $u^{\prime}(r_{\al})=\al v^{\prime}(r_{\al})$. Take $r=r_{\al}$
in (\ref{eq: Pohozaev}). We obtain that
\begin{eqnarray*}
0<\la\int_{0}^{r_{\al}}\left(u(t)^{p}-\al^{p}v(t)^{p}\right)t^{N-1}dt & = & \frac{r_{\al}^{N-s}}{p^{*}(s)}\left(u(r_{\al})^{p^{*}(s)}-\al^{p}v(r_{\al})^{p^{*}(s)}\right)\\
 & = & \frac{r_{\al}^{N-s}}{p^{*}(s)}\left(\al^{p^{*}(s)}-\al^{p}\right)v(r_{\al})^{p^{*}(s)}\\
 & < & 0,
\end{eqnarray*}
since $0<\al<1$ and $p^{*}(s)>p$. We reach a contradiction. This
proves (\ref{eq: r-alpha=00003D0}).

Therefore we obtain that $w(0)=\al<1.$ Recall that $w(0)=C_{u}/C_{v}$.
Hence
\[
C_{u}<C_{v}.
\]
Similarly, consider $\tilde{w}(r)=v(r)/u(r)$. Repeat above procedure
with respect to $\tilde{w}(r)$. We obtain that $C_{v}/C_{u}=\tilde{w}(0)<1$.
Hence
\[
C_{v}<C_{u}.
\]
We reach a contradiction. Therefore $u\equiv v$ in $(0,1)$. The
proof of Theorem \ref{thm: Uniqueness for local case } is complete.
\end{proof}

\section{Proof of Theorem \ref{thm: Uniqueness up to dilation} }

In this section we prove Theorem \ref{thm: Uniqueness up to dilation}.
Before giving the proof of Theorem \ref{thm: Uniqueness up to dilation},
let us revisit the following prototype of equation (\ref{eq: Global Object})
\begin{eqnarray}
-\De u=|u|^{2^{*}-2}u &  & \text{in }\R^{N},\label{eq: Caffarelli-G-S}
\end{eqnarray}
where $2^{*}=2N/(N-2)$ and $N\ge3$. Let $u\in{\cal D}^{1,2}(\R^{N})$
be a positive radial solution to equation (\ref{eq: Caffarelli-G-S}).
Applying the transform
\begin{eqnarray}
t=\log r & \text{and} & \psi(t)=r^{\frac{N-2}{2}}u(r),\label{eq: E-F transform-1}
\end{eqnarray}
for $r\in(0,\wq)$, we deduce that
\begin{eqnarray}
\psi^{\prime\prime}(t)-\left(\frac{N-2}{2}\right)^{2}\psi(t)+\psi^{2^{*}-1}=0 &  & \text{in }\R.\label{eq: Caffarelli-G-S-2}
\end{eqnarray}
Solving equation (\ref{eq: Caffarelli-G-S-2}) (see details in e.g.
\cite[Section 1]{Caffarelli1989}) and taking into account that $u\in{\cal D}^{1,2}(\R^{N})$,
we obtain that
\begin{eqnarray*}
u(x)=\left(\frac{\la\sqrt{N(N-2)}}{\la^{2}+|x-x_{0}|^{2}}\right)^{\frac{N-2}{2}}, &  & \la>0,x_{0}\in\R^{N}.
\end{eqnarray*}
So this gives the exact form of $u$.

In above approach the transform (\ref{eq: E-F transform-1}) turns
equation (\ref{eq: Caffarelli-G-S}) into ordinary differential equation
(\ref{eq: Caffarelli-G-S-2}) which can be solved explicitly. In the
general case $1<p<N$, $-\wq<\mu<\bar{\mu}$ and $0\le s<p$, a similar
type of transform to (\ref{eq: E-F transform-1}) will be used to
turn equation (\ref{eq: Global Object}) into an ordinary differential
equation system. Then we follow the argument of Boumediene, Veronica
and Peral \cite{B-V-P2006} to establish the uniqueness (up to a dilation)
of positive radial weak solutions to equation (\ref{eq: Global Object}).

Let $u\in{\cal D}^{1,p}(\R^{N})$ be a positive radial solution to
equation (\ref{eq: Global Object}). Then we have
\begin{equation}
\int_{0}^{\wq}\left({\displaystyle \frac{|u(r)|^{p^{*}(s)}}{r^{s}}}+|u^{\prime}(r)|^{p}\right)r^{N-1}dr=\frac{1}{\om_{N-1}}\int_{\R^{N}}\left({\displaystyle \frac{|u|^{p^{*}(s)}}{|x|^{s}}}+|\na u|^{p}\right)dx<\wq,\label{eq: energy}
\end{equation}
where $\om_{N-1}$ is the surface measure of the unit sphere in $\R^{N}$.
And $u$ is a solution to equation
\begin{equation}
\begin{cases}
-\left(r^{N-1}|u'(r)|^{p-2}u'(r)\right)^{\prime}=\left({\displaystyle \frac{\mu}{r^{p}}+\frac{u(r){}^{p^{*}(s)-p}}{r^{s}}}\right)u(r)^{p-1}r^{N-1}, & r\in(0,\wq),\\
u(r)>0, & r\in(0,\wq).
\end{cases}\label{eq: ODE global}
\end{equation}
Apply the transform:
\begin{eqnarray}
t=\log r, & y(t)=r^{\de}u(r), & z(t)=r^{(p-1)(\de+1)}|u^{\prime}(r)|^{p-2}u^{\prime}(r),\label{eq: E-F transform}
\end{eqnarray}
where we denote $\de=(N-p)/p$ in this section. We obtain by equation
(\ref{eq: ODE global}) that $y$ satisfies
\begin{eqnarray}
y^{\prime}=\de y+|z|^{\frac{1}{p-1}-1}z, & y>0 & \text{ in }\R,\label{eq: ODE of y}
\end{eqnarray}
and $z$ satisfies
\begin{eqnarray}
z^{\prime}=-\de z-y{}^{p^{*}(s)-1}-\mu y^{p-1} &  & \text{ in }\R.\label{eq: ODE of z}
\end{eqnarray}
Define $V:\R^{2}\to\R$ by
\begin{equation}
V(a,b)=\frac{1}{p^{*}(s)}|a|^{p^{*}(s)}+\frac{\mu}{p}|a|^{p}+\de ab+\frac{1}{p^{\prime}}|b|^{p^{\prime}}.\label{eq:V}
\end{equation}
Here $p^{\prime}=p/(p-1)$. It follows from equations (\ref{eq: ODE of y})
and (\ref{eq: ODE of z}) that
\begin{eqnarray*}
\frac{d}{dt}\left(V(y(t),z(t))\right)=0, &  & \forall t\in\R.
\end{eqnarray*}
Hence there is a constant $K$ such that
\begin{eqnarray}
V(y(t),z(t))\equiv K, &  & \forall t\in\R.\label{eq: first complete integral}
\end{eqnarray}
 Since $u\in{\cal D}^{1,p}(\R^{N})$ is a radial function, we have
(see \cite[Corollary II.1]{Lions1982} and its proof)
\[
\lim_{r\to0}r^{\de}u(r)=\lim_{r\to\wq}r^{\de}u(r)=0.
\]
Thus
\[
\lim_{|t|\to\wq}y(t)=0.
\]
Note also  that by (\ref{eq: energy}) we have
\[
\liminf_{r\to0}r^{\de+1}|u^{\prime}(r)|=\liminf_{r\to\wq}r^{\de+1}|u^{\prime}(r)|=0.
\]
Hence
\[
\liminf_{|t|\rightarrow\infty}|z(t)|=0.
\]
Sending $|t|\to\wq$ in (\ref{eq: first complete integral}), we deduce
that $K=0$, that is,
\begin{eqnarray}
\frac{1}{p^{*}(s)}y(t)^{p^{*}(s)}+\frac{\mu}{p}y(t){}^{p}+\de y(t)z(t)+\frac{1}{p^{\prime}}|z(t)|^{p^{\prime}}=0, &  & \forall t\in\R.\label{eq:K is zero}
\end{eqnarray}

We claim that $y$ is bounded on $\R$. Precisely, set
\begin{equation}
M=\left(\frac{p^{*}(s)(\bar{\mu}-\mu)}{p}\right)^{\frac{1}{p^{*}(s)-p}}.\label{eq: M}
\end{equation}

\begin{lem}
\label{lem: 4.1} We have
\begin{eqnarray}
y(t)\le M, &  & \forall t\in\R.\label{eq: boundedness of y}
\end{eqnarray}
Moreover, $y(t_{0})=M$ at a point $t_{0}\in\R$ if and only if $\de y(t_{0})=-|z(t_{0})|^{\frac{1}{p-1}-1}z(t_{0})$. \end{lem}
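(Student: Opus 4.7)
The plan is to treat (\ref{eq:K is zero}) as a pointwise constraint linking $y(t)$ and $z(t)$, and to eliminate $z$ by a one-variable minimization in the $z$-variable. Fix any $t\in\R$ and set $a = y(t) > 0$; the portion of $V$ that depends on $z$ is
\[
g(b) := \de a\,b + \frac{1}{p^{\prime}}|b|^{p^{\prime}},\qquad p^{\prime}=\frac{p}{p-1}.
\]
Since $p^{\prime}>1$, $g$ is strictly convex and coercive on $\R$, with $g^{\prime}(b)=\de a + |b|^{p^{\prime}-2}b$. Its unique critical point $b_{*}$ therefore satisfies $|b_{*}|^{p^{\prime}-2}b_{*}=-\de a$, which forces $b_{*}<0$ and yields $b_{*} = -(\de a)^{p-1}$. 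Using the identities $(p-1)p^{\prime}=p$ and $\de^{p}=\bar{\mu}$, a short direct computation gives the minimum value $g(b_{*}) = -\bar{\mu}\,a^{p}/p$.

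Substituting the pointwise bound $g(z(t))\ge g(b_{*})= -\bar{\mu}\,y(t)^{p}/p$ into (\ref{eq:K is zero}) produces
\[
\frac{1}{p^{*}(s)}y(t)^{p^{*}(s)} + \frac{\mu-\bar{\mu}}{p}y(t)^{p} \le 0.
\]
Because $y(t)>0$ by (\ref{eq: ODE of y}), dividing by $y(t)^{p}$ rearranges this to
\[
y(t)^{p^{*}(s)-p} \le \frac{p^{*}(s)(\bar{\mu}-\mu)}{p}=M^{p^{*}(s)-p},
\]
which is precisely (\ref{eq: boundedness of y}). Strict convexity of $g$ further implies that equality holds in the displayed inequality at some $t_{0}$ if and only if $z(t_{0})=b_{*}=-(\de y(t_{0}))^{p-1}$; a brief sign analysis separating the cases $z>0$ and $z<0$ shows that this identity is equivalent to $\de y(t_{0})=-|z(t_{0})|^{1/(p-1)-1}z(t_{0})$, which is the second assertion.

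The only real point of care is the exponent bookkeeping: one must keep straight the interplay between $p^{\prime}$, $1/(p-1)$ and $p-1$, and handle the sign of $|\cdot|^{\alpha-1}(\cdot)$ when reconciling the two ways of writing the equality condition. Once these are set up, strict convexity of $g$ together with the conservation law (\ref{eq:K is zero}) carry out the remainder of the argument automatically.
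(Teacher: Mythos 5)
Your proof is correct and follows essentially the same route as the paper: the paper applies Young's inequality $ab\le\frac{1}{p}|a|^{p}+\frac{1}{p^{\prime}}|b|^{p^{\prime}}$ (with its equality case) to the cross term $\de y z$ in (\ref{eq:K is zero}), which is exactly the bound you obtain by minimizing $g(b)=\de a b+\frac{1}{p^{\prime}}|b|^{p^{\prime}}$ in $b$. The only difference is that you re-derive this instance of Young's inequality by one-variable convex minimization instead of quoting it; the resulting inequality, the constant $M$, and the equality characterization are identical.
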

\begin{proof}
Recall that Young's inequality gives that
\begin{eqnarray*}
ab\leq\frac{1}{p}|a|^{p}+\frac{1}{p^{\prime}}|b|^{p^{\prime}}, &  & \forall\, a,b\in\R,
\end{eqnarray*}
and the equality holds if and only if $|a|=|b|^{\frac{1}{p-1}}$ and
$ab\ge0$. Hence
\begin{eqnarray}
-\de y(t)z(t)\le\frac{\de^{p}}{p}y(t)^{p}+\frac{1}{p^{\prime}}|z(t)|^{p^{\prime}}, &  & \forall t\in\R,\label{eq: 4.star-1}
\end{eqnarray}
and the equality holds at some $t=t_{0}$ if and only if $\de y(t_{0})=|z(t_{0})|^{\frac{1}{p-1}}$
and $z(t_{0})<0$. Note that $\de^{p}=\bar{\mu}=\left((N-p)/p\right)^{p}$.
Combining (\ref{eq: 4.star-1}) and (\ref{eq:K is zero}) gives us
that
\[
\frac{1}{p^{*}(s)}y(t){}^{p^{*}(s)}\le\frac{\bar{\mu}-\mu}{p}y(t){}^{p},
\]
which implies (\ref{eq: boundedness of y}), and the equality holds
at $t=t_{0}\in\R$ if and only if $\de y(t_{0})=-|z(t_{0})|^{\frac{1}{p-1}-1}z(t_{0})$.
This proves the lemma.
\end{proof}
Since $y$ is continuous in $\R$ and $y(t)\to0$ as $|t|\to\wq$,
$y$ achieves its maximum in $\R$. Let $t_{0}\in\R$ be such that
$y(t_{0})=\max_{\R}y$. Then $t_{0}$ is a critical point of $y$,
that is, $y^{\prime}(t_{0})=0$. By equation (\ref{eq: ODE of y}),
we obtain that $\de y(t_{0})=-|z(t_{0})|^{\frac{1}{p-1}-1}z(t_{0}).$
Then Lemma \ref{lem: 4.1} implies that $y(t_{0})=M$. We claim that
$t_{0}$ is the unique critical point of $y$ in $\R$. Indeed, suppose
that $t_{1}\in\R$ is another critical point of $y$. Then combining
equation (\ref{eq: ODE of y}) and Lemma \ref{lem: 4.1} yields that
$y(t_{1})=M$. With no loss of generality, we assume that $t_{1}<t_{0}$.
We prove that $y\equiv M$ in $[t_{1},t_{0}]$. Otherwise, there exists
$t_{2}\in(t_{1},t_{0})$ such that $y(t_{2})=\min_{[t_{1},t_{0}]}y<M$.
Then $y^{\prime}(t_{2})=0$. Combining equation (\ref{eq: ODE of y})
and Lemma \ref{lem: 4.1} again yields that $y(t_{2})=M$. We reach
a contradiction. Hence $y\equiv M$ on $[t_{1},t_{0}]$. But then
we have $y^{\prime}\equiv0$ on $[t_{1},t_{0}]$. Consider equation
(\ref{eq: ODE of y}) on the interval $[t_{1},t_{0}]$. We obtain
that $z\equiv-\left(\de M\right)^{p-1}$ on $[t_{1},t_{0}]$. Then
we derive from equation (\ref{eq: ODE of z}) that
\[
\de\left(\de M\right)^{p-1}-M^{p^{*}(s)-1}-\mu M^{p-1}=0,
\]
which implies that $M=\left(\bar{\mu}-\mu\right)^{\frac{1}{p^{*}(s)-p}}$.
We reach a contradiction to (\ref{eq: M}). Hence $t_{0}$ is the
unique critical point of $y$ in $\R$. Thus $y^{\prime}(t)>0$ for
$t<t_{0}$ and $y^{\prime}(t)<0$ for $t>t_{0}$. Note that both equations
(\ref{eq: ODE of y}) and (\ref{eq: ODE of z}) are invariant under
translations. Therefore, up to a translation, we assume in the rest
of this section that $y$ satisfies
\begin{equation}
\begin{cases}
y(0)=\max_{t\in R}y=M,\text{ and }\\
y^{\prime}>0\text{ in }(-\infty,0)\text{ and }y^{\prime}<0\text{ in }(0,\infty).
\end{cases}\label{eq: translation of y}
\end{equation}
It follows immediately from equation (\ref{eq: ODE of y}) and (\ref{eq: translation of y})
that
\begin{equation}
z(0)=-\left(\de M\right)^{p-1}.\label{eq: z(0)}
\end{equation}

\begin{lem}
\label{lem: properties of z} For the function $z$, we have,

(1) $z$ is a bounded continuous function on $\R$;

(2) in the case $0\le\mu<\bar{\mu}$, $z(t)<0$ for all $t\in\R$;

(3) in the case $\mu<0$, there exists a unique point $t_{-}\in\R$,
$t_{-}<0$, such that $z>0$ in $(-\infty,t_{-})$ and $z<0$ in $(t_{-},\infty)$. \end{lem}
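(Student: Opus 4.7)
The plan is to extract everything from the first integral (\ref{eq:K is zero}) together with the sign and monotonicity information already available for $y$. Continuity of $z$ is immediate from the ODE (\ref{eq: ODE of z}). For the boundedness in part (1), I solve (\ref{eq:K is zero}) for $\tfrac{1}{p'}|z|^{p'}$ and apply Young's inequality $|\delta y z| \le \tfrac{1}{2p'}|z|^{p'} + C(p)|y|^p$, obtaining
\[
\tfrac{1}{2p'}|z(t)|^{p'} \le \tfrac{1}{p^*(s)}y(t)^{p^*(s)} + \left(\tfrac{|\mu|}{p} + C(p)\right) y(t)^p,
\]
which together with Lemma \ref{lem: 4.1} forces $|z|$ to be bounded on $\R$.

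The structural observation that drives parts (2) and (3) is the following: at any putative zero $t^*$ of $z$, identity (\ref{eq:K is zero}) collapses to $\tfrac{1}{p^*(s)}y(t^*)^{p^*(s)} + \tfrac{\mu}{p}y(t^*)^p = 0$; since $y > 0$, this is solvable only when $\mu < 0$, and in that case forces $y(t^*) = y_* := \bigl(-p^*(s)\mu/p\bigr)^{1/(p^*(s)-p)}$. In the case $\mu \ge 0$ this already rules out any zero of $z$, and since $z(0) = -(\delta M)^{p-1} < 0$ by (\ref{eq: z(0)}), continuity gives $z < 0$ throughout $\R$, proving (2).

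Suppose now $\mu < 0$. A direct computation ($y_*^{p^*(s)-p}/M^{p^*(s)-p} = -\mu/(\bar\mu - \mu) < 1$) gives $0 < y_* < M$, so by (\ref{eq: translation of y}) together with $y(\pm\infty)=0$ there exist unique $t_- < 0 < t_+$ with $y(t_\pm) = y_*$, and these are the only candidates for zeros of $z$. Rewriting (\ref{eq: ODE of y}) as $y' - \delta y = |z|^{\frac{1}{p-1}-1} z$ and inserting the signs of $y'$ from (\ref{eq: translation of y}) yields
\[
z(t) > -(\delta y(t))^{p-1}\ \text{for } t < 0, \qquad z(t) < -(\delta y(t))^{p-1}\ \text{for } t > 0,
\]
so $z(t_+) < 0$, ruling out $t_+$ as a zero. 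At $t_-$, substituting $y = y_*$ into (\ref{eq:K is zero}) reduces it to $\delta y_* z + \tfrac{1}{p'}|z|^{p'} = 0$, whose two roots are $0$ and $-(p'\delta y_*)^{p-1}$; since $p' > 1$, the second root is strictly below $-(\delta y_*)^{p-1}$ and is thus excluded by the lower bound above, leaving $z(t_-) = 0$. Differentiating (\ref{eq: ODE of z}) at $t_-$ and using $y_*^{p^*(s)-p} = -p^*(s)\mu/p$ gives $z'(t_-) = \mu y_*^{p-1}(p^*(s) - p)/p < 0$, so $z$ crosses zero transversally from positive to negative at $t_-$; combined with the uniqueness of the zero this yields the sign pattern claimed in (3). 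The only delicate point is ruling out the extraneous root $-(p'\delta y_*)^{p-1}$ at $t_-$, which rests entirely on the strict inequality $p' > 1$.
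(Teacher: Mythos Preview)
Your proof is correct. Parts (1) and (2) match the paper's argument in spirit (the paper is terser, but the substance is the same). In part (3), however, you take a genuinely different route for the \emph{existence} of the zero.

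The paper establishes existence by solving the ODE (\ref{eq: ODE of z}) as
\[
e^{\delta t}z(t)=\int_{-\infty}^{t}e^{\delta s}y(s)^{p-1}\bigl(-\mu-y(s)^{p^{*}(s)-p}\bigr)\,ds,
\]
observing that for $\mu<0$ the integrand is positive once $y$ is small, hence $z>0$ for $t\ll0$; since $z(0)<0$, the intermediate value theorem produces a zero. Uniqueness then follows because any zero forces $y'>0$ (hence $t_{0}<0$) and $y(t_{0})=y_{*}$, with $y$ strictly monotone on $(-\infty,0)$.

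You instead bypass the integral representation entirely: you locate the only two candidates $t_{\pm}$ where $y=y_{*}$, rule out $t_{+}$ via the sign bound $z< -(\delta y)^{p-1}$ on $(0,\infty)$, and then \emph{directly} force $z(t_{-})=0$ by feeding $y=y_{*}$ into the first integral (\ref{eq:K is zero}) and eliminating the extraneous root $-(p'\delta y_{*})^{p-1}$ with the matching lower bound $z>-(\delta y)^{p-1}$ on $(-\infty,0)$. This is purely algebraic and avoids any asymptotic or integral analysis; the transversality $z'(t_{-})<0$ then gives the sign pattern. The paper's approach is perhaps more robust (it would survive perturbations that break the exact first integral), while yours extracts more mileage from the conserved quantity and is self-contained once (\ref{eq:K is zero}) and (\ref{eq: translation of y}) are in hand.
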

\begin{proof}
(1) The boundedness of $z$ follows from (\ref{eq:K is zero}) and
boundedness of $y$.

(2) In the case $0\le\mu<\bar{\mu}$, it follows from (\ref{eq:K is zero})
easily that $z(t)<0$ for all $t\in\R$.

(3) Consider the case $\mu<0$. We claim that there exists a constant
$L>0$ sufficiently large such that $z(t)>0$ for $t<-L$. Indeed,
since $z$ is bounded by (1), we have $e^{\de t}z(t)\to0$ as $t\to-\wq$.
Solve equation (\ref{eq: ODE of z}). We deduce that
\begin{eqnarray*}
e^{\de t}z(t)=\int_{-\wq}^{t}e^{\de s}y(s)^{p-1}\left(-\mu-y(s)^{p^{*}(s)-p}\right)ds &  & \text{for }t\in\R.
\end{eqnarray*}
Since $\mu<0$ and $y(t)\to0$ as $t\to-\wq$, there exists $L>0$
sufficiently large such that $-\mu-y(s)^{p^{*}(s)-p}>0$ for $s<-L$.
Thus $e^{\de t}z(t)>0$ for $t<-L$. This proves the claim.

Note that by (\ref{eq: z(0)}) we have $z(0)<0$. Hence, by above
claim, the set
\[
Z=\{t\in\R:z(t)=0\}
\]
is not empty. To prove (3), it is enough to prove that $Z$ consists
of only one point. Let $t_{0}\in\R$ be an arbitrary point in $Z$.
Then $z(t_{0})=0$. We show that $t_{0}$ can be uniquely determined.
Substitute $t=t_{0}$ into equation (\ref{eq: ODE of y}). We obtain
that $y^{\prime}(t_{0})=\de y(t_{0})>0$. Hence $t_{0}\in(-\wq,0)$
by (\ref{eq: translation of y}). Substitute $t=t_{0}$ into equation
(\ref{eq:K is zero}). We obtain that
\begin{equation}
y(t_{0})=\left(-p^{*}(s)\mu/p\right)^{1/(p^{*}(s)-p)}.\label{eq: star-1}
\end{equation}
Since $y$ is strictly monotone in $(-\wq,0)$ by (\ref{eq: translation of y}),
we find that $t_{0}$ is the unique point in $(-\wq,0)$ which satisfies
(\ref{eq: star-1}). This proves that $Z$ consists of only one point.
Denote by $t_{-}$ the point in $Z$. The proof of (3) is complete.
\end{proof}
Now we study the asymptotic behaviors of $y$ and $z$. Let $\ga\in\R$
be an arbitrary number and define
\begin{eqnarray*}
y_{\gamma}(t)=e^{(\gamma-\delta)t}y(t), &  & t\in\R.
\end{eqnarray*}
 By (\ref{eq: ODE of y}) and (\ref{eq: translation of y}), we have
\begin{equation}
\begin{cases}
y_{\gamma}^{\prime}(t)=\left(\ga-H(t)\right)y_{\gamma}(t), & t\in\R,\\
y_{\gamma}(0)=M,
\end{cases}\label{eq:y-gamma}
\end{equation}
where $M$ is defined as in (\ref{eq: M}) and $H:\R\to\R$ is defined
by
\begin{eqnarray}
H(t)=-\frac{|z(t)|^{\frac{1}{p-1}-1}z(t)}{y(t)}, &  & t\in\R.\label{eq: definition of H}
\end{eqnarray}
Note that $H$ is a continuous function on $\R$. Let $t_{-}$ be
the number defined as in Lemma \ref{lem: properties of z} in the
case $\mu<0$. $H$ is continuously differentiable on $\R$ except
at the point $t=t_{-}$ in the case $\mu<0$.

The function $H$ plays a key role in the proof of Theorem \ref{thm: Uniqueness up to dilation}.
We derive the equation satisfied by $H$. For $t\ne t_{-}$, We have
that
\begin{eqnarray*}
H^{\prime} & = & y^{-2}\left(|z|^{\frac{1}{p-1}-1}zy^{\prime}-\frac{1}{p-1}|z|^{\frac{1}{p-1}-1}z^{\prime}y\right)\\
 & = & y^{-2}|z|^{\frac{2-p}{p-1}}\left((\delta y+|z|^{\frac{1}{p-1}-1}z)z-\frac{1}{p-1}y(-\delta z-y{}^{p^{*}(s)-1}-\mu y{}^{p-1})\right)\\
 & = & y^{-2}|z|^{\frac{2-p}{p-1}}\left(p^{\prime}\delta yz+|z|^{p^{\prime}}+\frac{1}{p-1}y^{p^{*}(s)}+\frac{\mu}{p-1}y^{p}\right)\\
 & = & p^{\prime}y^{-2}|z|^{\frac{2-p}{p-1}}\left(\delta yz+\frac{1}{p^{\prime}}|z|^{p^{\prime}}+\frac{1}{p}y^{p^{*}(s)}+\frac{\mu}{p}y^{p}\right)\\
 & = & \frac{p^{*}(s)-p}{p^{*}(s)(p-1)}y^{p^{*}(s)-2}|z|^{\frac{2-p}{p-1}},
\end{eqnarray*}
where the second equality follows from equations (\ref{eq: ODE of y})
and (\ref{eq: ODE of z}), and the last equality follows from (\ref{eq:K is zero}).
Thus by the definition (\ref{eq: definition of H}) of $H$, we obtain
that
\begin{eqnarray}
H^{\prime}(t)=\frac{p^{*}(s)-p}{p^{*}(s)(p-1)}y(t)^{p^{*}(s)-p}|H(t)|^{2-p} &  & \text{for }t\ne t_{-}.\label{eq:Equ. of H-1}
\end{eqnarray}
We remark that equation (\ref{eq:Equ. of H-1}) holds at $t=t_{-}$
if $0\le\mu$. On the other hand, by (\ref{eq:K is zero}) we have
that
\begin{eqnarray*}
\frac{1}{p^{*}(s)}y{}^{p^{*}(s)-p}+\frac{1}{p^{\prime}}\frac{|z|^{p^{\prime}}}{y^{p}}+\delta\frac{z}{y^{p-1}}+\frac{\mu}{p}\equiv0 &  & \text{in }\R.
\end{eqnarray*}
Recall that $\Ga_{\mu}$ is defined as in (\ref{eq: definition of Gamma-mu}).
We obtain that
\begin{eqnarray}
\frac{p}{p^{*}(s)}y(t){}^{p^{*}(s)-p}=-\Ga_{\mu}(H(t)) &  & \text{in }\R.\label{eq:y and H}
\end{eqnarray}
Combining equation (\ref{eq:Equ. of H-1}) and equation (\ref{eq:y and H})
yields that
\begin{eqnarray}
H^{\prime}(t)=-\frac{p^{*}(s)-p}{p(p-1)}|H(t)|^{2-p}\Ga_{\mu}(H(t)) &  & \text{for }t\ne t_{-}.\label{eq: equ. of H-2}
\end{eqnarray}
That is, $H$ satisfies equation (\ref{eq: equ. of H-2}). We remark
that when $0\le\mu<\bar{\mu}$, (\ref{eq: equ. of H-2}) holds for
all $t\in\R$.

We claim that
\begin{eqnarray}
\lim_{t\to-\wq}H(t)=\ga_{1} & \text{ and } & \lim_{t\to\wq}H(t)=\ga_{2}.\label{eq: limit of H}
\end{eqnarray}
Indeed, Let $t\to-\wq$ and $t\to\wq$ in equation (\ref{eq:y and H})
respectively. we obtain that
\begin{equation}
\lim_{t\to-\wq}\Ga_{\mu}(H(t))=\lim_{t\to\wq}\Ga_{\mu}(H(t))=0.\label{eq: 4.star-2}
\end{equation}
By equation (\ref{eq:Equ. of H-1}), $H$ is strictly increasing in
$\R$. Hence there exist $a,b$, $-\wq\le a<b\le\wq$, such that $\lim_{t\to-\wq}H(t)=a$
and $\lim_{t\to\wq}H(t)=b$. Note that $\Ga_{\mu}(\ga)\to\wq$ as
$|\ga|\to\wq$. Hence (\ref{eq: 4.star-2}) implies that $a,b$ are
finite. Then $\Ga_{\mu}(a)=\Ga_{\mu}(b)=0$. Since $\ga_{1}$ and
$\ga_{2}$ are the only two roots of $\Ga_{\mu}$ in $\R$ and $\ga_{1}<\ga_{2}$,
we obtain that $a=\ga_{1}$ and $b=\ga_{2}$, and then the claim is
proved. Therefore, the monotonicity of $H$ implies that
\begin{eqnarray}
\ga_{1}<H(t)<\ga_{2}, &  & \forall\: t\in\R.\label{eq: range of H}
\end{eqnarray}
We claim that
\begin{equation}
\int_{-\infty}^{0}(H(s)-\gamma_{1})ds+\int_{0}^{\infty}(\gamma_{2}-H(s))ds<\infty.\label{eq:integrable of H}
\end{equation}
To prove (\ref{eq:integrable of H}), rewrite $\Ga_{\mu}$ by $\Ga_{\mu}(s)=(s-\gamma_{1})(s-\gamma_{2})\tilde{\Ga}_{\mu}(s)$,
where $\tilde{\Ga}_{\mu}$ is a continuous function on $\R$ satisfying
$\inf_{\R}\tilde{\Ga}_{\mu}>0$. Then by change of variable, we have
that
\begin{eqnarray*}
\int_{-\infty}^{2t_{-}}(H(s)-\gamma_{1})ds & = & \int_{\gamma_{1}}^{H(2t_{-})}\frac{(\tau-\gamma_{1})d\tau}{\tau^{\prime}(s)}\\
 & = & \int_{\gamma_{1}}^{H(2t_{-})}\frac{(\tau-\gamma_{1})d\tau}{-\frac{p^{*}(s)-p}{p(p-1)}|\tau|^{2-p}\Ga_{\mu}(\tau)}\\
 & = & \int_{\gamma_{1}}^{H(2t_{-})}\frac{p(p-1)d\tau}{(p^{*}(s)-p)|\tau|^{2-p}(\gamma_{2}-\tau)\tilde{\Ga}_{\mu}(\tau)}\\
 & \le & C\int_{\gamma_{1}}^{H(2t_{-})}|\tau|^{p-2}d\tau\\
 & < & \wq,
\end{eqnarray*}
where $C=p(p-1)/\left((p^{*}(s)-p)(\ga_{2}-H(2t_{-}))\inf_{\R}\tilde{\Ga}_{\mu}\right)$.
Similarly, we have that

\begin{eqnarray*}
\int_{0}^{\infty}(\gamma_{2}-H(s))ds & = & \int_{\delta}^{\gamma_{2}}\frac{(\gamma_{2}-\tau)}{\tau^{\prime}(s)}d\tau\\
 & = & \int_{\delta}^{\gamma_{2}}\frac{(\gamma_{2}-\tau)d\tau}{-\frac{p^{*}(s)-p}{p(p-1)}|\tau|^{2-p}\Ga_{\mu}(\tau)}\\
 & = & \int_{\delta}^{\gamma_{2}}\frac{p(p-1)d\tau}{(p^{*}(s)-p)|\tau|^{2-p}(\tau-\gamma_{1})\tilde{\Ga}_{\mu}(\tau)}\\
 & \le & C^{\prime}\int_{\delta}^{\gamma_{2}}\tau{}^{p-2}d\tau\\
 & < & \wq,
\end{eqnarray*}
where $C^{\prime}=p(p-1)/\left((p^{*}(s)-p)(\de-\ga_{1})\inf_{\R}\tilde{\Ga}_{\mu}\right)$.
This proves (\ref{eq:integrable of H}).

Now we are in a position to prove Theorem \ref{thm: Uniqueness up to dilation}.

\begin{proof}[Proof of Theorem  \ref{thm: Uniqueness up to dilation}]
Let $u\in{\cal D}^{1,p}(\R^{N})$ be a positive radial solution to
equation (\ref{eq: Global Object}) and $(y,z)$ defined by the transform
(\ref{eq: E-F transform}) with respect to $u$. Let $H$ be defined
as in (\ref{eq: definition of H}). First we show that $u$ satisfies
(\ref{est: optimal result of B-V-P-1}) and (\ref{est: optimal result of B-V-P-2}).

Integrate (\ref{eq:y-gamma}). We obtain that
\begin{eqnarray}
e^{(\gamma-\delta)t}y(t)=M\exp\left(\int_{0}^{t}(\gamma-H(\tau))d\tau\right) &  & \text{for }t\in\R.\label{eq:integral of y gamma}
\end{eqnarray}
Hence we derive that
\begin{equation}
\begin{cases}
{\displaystyle \lim_{t\rightarrow-\infty}}e^{(\gamma_{1}-\delta)t}y(t)=M\exp\left(\int_{-\infty}^{0}(H-\gamma_{1})d\tau\right)=:C_{1}, & \text{and}\\
{\displaystyle \lim_{t\rightarrow\infty}}e^{(\gamma_{2}-\delta)t}y(t)=M\exp\left(\int_{0}^{\infty}(\gamma_{2}-H)d\tau\right)=:C_{2},
\end{cases}\label{eq:limit of y}
\end{equation}
which is equivalent to (\ref{est: optimal result of B-V-P-1}). Since
\begin{equation}
|z(t)|^{\frac{1}{p-1}-1}z(t)=H(t)y(t),\label{eq: z determined by y and H}
\end{equation}
we derive from (\ref{eq: limit of H}) and (\ref{eq:limit of y})
that
\begin{equation}
\begin{cases}
{\displaystyle \lim_{t\rightarrow-\infty}}e^{(\gamma_{1}-\delta)t}|z(t)|^{\frac{1}{p-1}}=C_{1}|\gamma_{1}|, & \text{and}\\
{\displaystyle \lim_{t\rightarrow\infty}}e^{(\gamma_{2}-\delta)t}|z(t)|^{\frac{1}{p-1}}=C_{2}\gamma_{2},
\end{cases}\label{eq:limit of z}
\end{equation}
which is equivalent to (\ref{est: optimal result of B-V-P-2}). This
proves that $u$ satisfies (\ref{est: optimal result of B-V-P-1})
and (\ref{est: optimal result of B-V-P-2}).

Next we prove the uniqueness of $u$ up to a dilation. Suppose that
$u_{1},u_{2}\in{\cal D}^{1,p}(\R^{N})$ are two positive radial solutions
to equation (\ref{eq: Global Object}). Define $(y_{i},z_{i})$ by
the transform (\ref{eq: E-F transform}) with respect to $u_{i}$
for $i=1,2$. Define $H_{i}$ as in (\ref{eq: definition of H}) with
respect to $(y_{i},z_{i})$ for $i=1,2$. Then both $(y_{1},z_{1})$
and $(y_{2},z_{2})$ satisfy equations (\ref{eq: ODE of y}) and (\ref{eq: ODE of z}),
and $H_{1}$ and $H_{2}$ satisfy equation (\ref{eq: equ. of H-2}).

To prove that $u_{1}=\la^{(p-N)/p}u_{2}(\cdot/\la)$ for some $\la>0$,
it is equivalent to prove that $y_{1}=y_{2}(\cdot-t_{0})$ for some
$t_{0}\in\R$. Up to a translation, we assume that both $y_{1}$ and
$y_{2}$ satisfy (\ref{eq: translation of y}). We prove that $y_{1}\equiv y_{2}$
on $\R$. Note that under this assumption, we have that $(y_{1}(0),z_{1}(0))=(y_{2}(0),z_{2}(0))=(M,-(\de M)^{p-1})$
by (\ref{eq: translation of y}) and (\ref{eq: z(0)}).

Define $f:(\ga_{1},\ga_{2})\to\R$ by
\[
f(\ga)=-\frac{p^{*}(s)-p}{p(p-1)}|\ga|^{2-p}\Ga_{\mu}(\ga).
\]
Then by (\ref{eq: equ. of H-2}) and (\ref{eq: range of H}), both
$H_{1}$ and $H_{2}$ are solutions to the following initial value
problem
\begin{equation}
\begin{cases}
H^{\prime}(t)=f(H(t)) & \text{for }(t,H)\in I\times(\ga_{1},\ga_{2}),\\
H(0)=\de.
\end{cases}\label{eq: equ. of H-3}
\end{equation}
In equation (\ref{eq: equ. of H-3}), $I=\R$ in the case \emph{$0\le\mu<\bar{\mu}$},
and $I=\R\backslash\{t_{-},t_{-}^{\prime}\}$ in the case \emph{$-\wq<\mu<0$},
where $t_{-}<0$ is the number defined as in Lemma \ref{lem: properties of z}
with respect to $z_{1}$ and $t_{-}^{\prime}<0$ the number with respect
to $z_{2}$. So we have two cases:

\emph{Case 1: $0\le\mu<\bar{\mu}$; }

\emph{Case 2: $-\wq<\mu<0$. }

In Case 1, we have $I=\R$ in equation (\ref{eq: equ. of H-3}). Note
that in this case, $0\le\ga_{1}<\de<\ga_{2}$. Then $f\in C^{1}(\ga_{1},\ga_{2})$.
Hence $f$ is locally Lipshitz in $(\ga_{1},\ga_{2})$. Then by Lemma
\ref{lem:Uniqueness lemma} (1), equation (\ref{eq: equ. of H-3})
admits at most one solution. Hence $H_{1}\equiv H_{2}$ on $\R.$
It follows from equation (\ref{eq:integral of y gamma}) that $y_{1}\equiv y_{2}$
on $\R$. So the uniqueness in Case 1 is proved.

In case 2, we have that $I=\R\backslash\{t_{-},t_{-}^{\prime}\}$
in equation (\ref{eq: equ. of H-3}). Note that in this case $0\in(\ga_{1},\ga_{2})$.
We divide the proof into three cases:

\emph{Case 2.1: $p=2$;}

\emph{Case 2.2: $1<p<2$;}

\emph{Case 2.3: $2<p<N$. }

In Case 2.1, $f(\ga)=-(2^{*}(s)-2)(\ga^{2}-(N-2)\ga+\mu)/2$. It is
obvious that $f\in C^{1}(\ga_{1},\ga_{2})$. Hence $f$ is locally
Lipshitz in $(\ga_{1},\ga_{2})$. So we can prove that $y_{1}\equiv y_{2}$
on $\R$ in the same way as that of Case 1. The uniqueness in Case
2.1 is proved.

In Case 2.2, $f$ is not Lipshitz in any neighborhood of $\ga=0$.
We can not use above argument. Let $y=y_{1}-y_{2}$ and $z=z_{1}-z_{2}.$
Then $y$ satisfies equation
\begin{eqnarray}
 & {\displaystyle \begin{cases}
(e^{-\delta t}y)^{\prime}=e^{-\delta t}\left(|z_{1}|^{\frac{1}{p-1}-1}z_{1}-|z_{2}|^{\frac{1}{p-1}-1}z_{2}\right) & \text{in }\R,\\
y(0)=0,
\end{cases}}\label{eq: equ. of y_1-y_2}
\end{eqnarray}
and $z$ satisfies equation
\begin{eqnarray}
 & {\displaystyle \begin{cases}
(e^{\delta t}z)^{\prime}=e^{\delta t}(y_{2}^{p^{*}(s)-1}-y_{1}^{p^{*}(s)-1}+\mu y_{2}^{p-1}-\mu y_{1}^{p-1}) & \text{in }\R,\\
z(0)=0.
\end{cases}}\label{eq: equ. of z_1 -z_2}
\end{eqnarray}
Fix a number $T$, $T>0$. Since $1<p<2$, the function $|t|^{\frac{1}{p-1}-1}t$
is continuously differentiable on $\R$. We have that
\begin{eqnarray*}
\left||z_{1}|^{\frac{1}{p-1}-1}z_{1}-|z_{2}|^{\frac{1}{p-1}-1}z_{2}\right|\leq\left(\frac{1}{p-1}\sup_{\tau\in\R}|\tau|^{\frac{2-p}{p-1}}\right)|z_{1}-z_{2}|=:C_{3}|z| &  & \text{on }[-T,T].
\end{eqnarray*}
Recall that $0<y_{i}(t)$ for all $i=1,2$. Hence $\inf_{[-T,T]}y_{1}>0$
and $\inf_{[-T,T]}y_{2}>0$. We have that
\begin{eqnarray*}
\left|y_{2}^{p^{*}(s)-1}-y_{1}^{p^{*}(s)-1}\right|+\left|\mu y_{2}^{p-1}-\mu y_{1}^{p-1}\right|\leq C_{4}|y_{1}-y_{2}|=C_{4}|y| &  & \text{on }[-T,T],
\end{eqnarray*}
where $C_{4}>0$ is a constant depending on $N,p,\mu,s$, $\inf_{[-T,T]}y_{1}$
and $\inf_{[-T,T]}y_{2}$. Let $C_{T}=\max(C_{3},C_{4})$. Then by
equation (\ref{eq: equ. of z_1 -z_2}) we obtain that
\begin{eqnarray}
e^{\delta t}|z(t)|=\left|\int_{0}^{t}(e^{\delta\tau}z(\tau))^{\prime}d\tau\right|\leq C_{T}\int_{0}^{t}e^{\delta\tau}|y(\tau)|d\tau &  & \text{for }0<t<T.\label{eq: 4.star-3}
\end{eqnarray}
Write $Y(t)=e^{-\delta t}y(t)$ for $t\in\R$. By equation (\ref{eq: equ. of y_1-y_2})
and above estimate, we obtain that
\begin{eqnarray*}
|Y(t)|\leq C_{T}^{2}t\int_{0}^{t}|Y(\tau)|d\tau &  & \text{for }0<t<T.
\end{eqnarray*}
Since $Y(0)=0$, it follows from the well known Gronwall's inequality
that $Y\equiv0$ in $[0,T]$. Hence $y\equiv0$ on $[0,T]$. We can
prove similarly that $y\equiv0$ on $[-T,0]$. Since $T>0$ is arbitrary,
we obtain that $y\equiv0$ on $\R.$ So the uniqueness is proved in
Case 2.2.

It remains to consider Case 2.3. First we prove that $t_{-}=t_{-}^{\prime}$.
With no loss of generality, we assume that $t_{-}^{\prime}\le t_{-}<0$.
Then both $z_{1}$ and $z_{2}$ do not change sign in the interval
$(t_{-},\wq)$. Precisely, both $z_{1}$ and $z_{2}$ are negative
in $(t_{-},\wq)$. Then the function $|z_{i}(t)|^{\frac{1}{p-1}-1}z_{i}(t)$
is continuously differentiable in $(t_{-},\wq)$. We can apply the
same argument as that of Case 2.2 to show that $y\equiv0$ in $(t_{-},\wq)$.
Then it follows from (\ref{eq: z determined by y and H}) that $z\equiv0$
in $(t_{-},\wq)$. In particular, we have that $z_{2}(t_{-})=z_{1}(t_{-})=0$.
Hence we apply Lemma \ref{lem: properties of z} (3) to $z_{2}$ and
obtain that $t_{-}^{\prime}=t_{-}$. Thus in case 2.3 we have that
$I=\R\backslash\{t_{-}\}$.

We still need to show that $y_{1}\equiv y_{2}$ in $(-\wq,t_{-})$.
Consider the following initial value problem
\begin{equation}
\begin{cases}
H^{\prime}(t)=f(H(t)) & \text{for }(t,H)\in(-\wq,t_{-})\times(\ga_{1},\ga_{2}),\\
H(t_{-})=0.
\end{cases}\label{eq: equ. of H-4}
\end{equation}
Then both $H_{1}$ and $H_{2}$ are nondecreasing solutions to equation
(\ref{eq: equ. of H-4}) in $(-\wq,t_{-})$. Note that $\Ga_{\mu}$
is strictly decreasing in $(\ga_{1},0)$. Hence in Case 2.3, $f$
is strictly increasing in $(\ga_{1},0)$. Then by Lemma \ref{lem:Uniqueness lemma},
equation (\ref{eq: equ. of H-4}) admits at most one nondecreasing
solution in $(-\wq,t_{-})$. Hence $H_{1}\equiv H_{2}$ in $(-\wq,t_{-})$.
It follows from (\ref{eq:integral of y gamma}) that $y_{1}\equiv y_{2}$
in $(-\wq,t_{1})$. This completes the proof for Case 2.3 and so the
proof of Theorem \ref{thm: Uniqueness up to dilation} is complete.
\end{proof}

\section{Proof of Theorem \ref{thm: Asym. beha. 1}}

Let $u\in W_{0}^{1,p}(B)$ be a positive radial solution to equation
(\ref{eq: Local Object}). Recall that we assume that $B$ is the
unit ball in $\R^{N}$ centered at the origin. Then $u$ is a solution
to the following ordinary differential equation
\begin{equation}
\begin{cases}
-\left(r^{N-1}|u'(r)|^{p-2}u'(r)\right)^{\prime}=\left({\displaystyle \frac{\mu}{r^{p}}+\frac{u(r){}^{p^{*}(s)-p}}{r^{s}}}+\la\right)u(r)^{p-1}r^{N-1}, & r\in(0,1),\\
u(r)>0, & r\in(0,1),\\
u(1)=0.
\end{cases}\label{eq: ODE of weak solutions}
\end{equation}
Since $u\in W_{0}^{1,p}(B)$, we have
\begin{equation}
\int_{0}^{1}\left(|u(r)|^{p}+|u^{\prime}(r)|^{p}\right)r^{N-1}dr=\frac{1}{\om_{N-1}}\int_{B}\left(|u|^{p}+|\na u|^{p}\right)dx<\wq,\label{eq: polar coordinate formular}
\end{equation}
where $\om_{N-1}$ is the surface measure of the unit sphere in $\R^{N}$.

Before proving Theorem \ref{thm: Asym. beha. 1}, we remark that in
fact both $u$ and $r^{N-1}|u^{\prime}|^{p-2}u^{\prime}$ are continuously
differentiable in $(0,1)$, and equation (\ref{eq: ODE of weak solutions})
can be understood in the classical sense. Indeed, it is well known
that every radially symmetric function in $W_{0}^{1,p}(B)$, after
modifying on a set of measure zero, is a continuous function in $(0,1)$.
Then by equation (\ref{eq: ODE of weak solutions}) we deduce that
$r^{N-1}|u^{\prime}|^{p-2}u^{\prime}\in C^{1}(0,1)$. Thus equation
(\ref{eq: ODE of weak solutions}) can be understood in the classical
sense.

We prove Theorem \ref{thm: Asym. beha. 1} now. We only prove Theorem
\ref{thm: Asym. beha. 1} in the case $0<\mu<\bar{\mu}$. We can prove
Theorem \ref{thm: Asym. beha. 1} in the case $\mu\le0$ similarly.
In the case when $0<\mu<\bar{\mu}$, the same result was obtain by
the authors \cite{He-Xiang2015} for positive radial weak solutions
to the following equation
\begin{eqnarray*}
-\Delta_{p}u-\frac{\mu}{|x|^{p}}|u|^{p-2}u=f(u)-m|u|^{p-2}u, &  & \text{in }\R^{N},
\end{eqnarray*}
where $f$ satisfies the growth condition $|f(t)|\le C(|t|^{p-1}+|t|^{p^{*}-1})$
for all $t\in\R$ by the assumptions there. Theorem \ref{thm: Asym. beha. 1}
can be proved by the same argument as that of \cite[Theorem 1.1]{He-Xiang2015}.
For the sake of completeness, we mimic a proof here.

\begin{proof}[Proof of Theorem \ref{thm: Asym. beha. 1}] Let $u\in W_{0}^{1,p}(B)$
be a positive radial solution to equation (\ref{eq: Local Object})
with \emph{$0<\mu<\bar{\mu}$} in the following. To start with we
claim that
\begin{eqnarray}
u^{\prime}(r)<0 &  & \text{ for }r\text{ sufficiently small. }\label{eq: decreasing property}
\end{eqnarray}
Indeed, note that since $u\in W_{0}^{1,p}(B)$ is a radial function,
we have by \cite[Corollary II.1]{Lions1982} that
\begin{eqnarray*}
u(r)r^{\frac{N-p}{p}}=o(1) &  & \text{as }r\to0.
\end{eqnarray*}
It follows that
\begin{eqnarray}
u(r){}^{p^{*}(s)-p}r^{p-s}=o(1) &  & \text{as }r\rightarrow0.\label{eq: estimate of f over u}
\end{eqnarray}
Hence
\begin{eqnarray*}
\left({\displaystyle \frac{\mu}{r^{p}}+\frac{u(r){}^{p^{*}(s)-p}}{r^{s}}}+\la\right)u(r)^{p-1}r^{N-1}>\frac{\mu}{2}u(r)^{p-1}r^{N-p-1}>0 &  & \mbox{for }r\text{ small enough}.
\end{eqnarray*}
Therefore $\left(r^{N-1}|u^{\prime}|^{p-2}u^{\prime}\right)^{\prime}<0$
for $r$ small enough by equation (\ref{eq: ODE of weak solutions}).
Hence $r^{N-1}|u^{\prime}|^{p-2}u^{\prime}$ is strictly decreasing
for $r$ small enough. So we can assume that $\lim_{r\to0}r^{N-1}|u^{\prime}|^{p-2}u^{\prime}=a$
for some $a\in(-\wq,\wq]$. We will prove that $a=0$. Suppose, on
the contrary, that $a\ne0$. Then there exist constants $C,r_{0}>0$
such that $|u^{\prime}(r)|\ge Cr^{-\frac{N-1}{p-1}}$ for $0<r<r_{0}$.
Then we have
\[
\int_{0}^{r_{0}}|u^{\prime}(r)|^{p}r^{N-1}dr\ge C\int_{0}^{r_{0}}r^{-\frac{N-1}{p-1}}dr=\wq.
\]
We reach a contradiction to (\ref{eq: polar coordinate formular}).
Hence $a=0$. Therefore $r^{N-1}|u^{\prime}|^{p-2}u^{\prime}<0$ for
$r$ small enough. This proves (\ref{eq: decreasing property}).

Consider the function
\begin{eqnarray}
w(r)=-\frac{r^{p-1}|u^{\prime}(r)|^{p-2}u^{\prime}(r)}{u^{p-1}(r)} &  & \text{for }r>0.\label{eq: definition of w}
\end{eqnarray}
Then $w\in C^{1}(0,1)$, $w(r)>0$ for $r>0$ small enough by (\ref{eq: decreasing property}),
and $w$ satisfies
\begin{equation}
w^{\prime}(r)=\frac{1}{r}\left(\Gamma_{\mu}\Big(w(r)^{\frac{1}{p-1}}\Big)+u(r){}^{p^{*}(s)-p}r^{p-s}+\la r^{p}\right).\label{eq:equa.of w}
\end{equation}
Recall that $\Gamma_{\mu}$ is defined as in (\ref{eq: definition of Gamma-mu}).
To prove Theorem \ref{thm: Asym. beha. 1}, it is enough to prove
that
\begin{eqnarray}
w(r)=\ga_{1}^{p-1}+o(r^{\de}) &  & \text{as }r\to0,\label{eq: estimate of w}
\end{eqnarray}
for some $\de\in(0,1)$. Here $\ga_{1}$ is defined as in (\ref{eq: definition of ga-1 and ga-2}).
In the case when $0<\mu<\bar{\mu},$ we note that $0<\ga_{1}<(N-p)/p$.

First, we prove that $\lim_{r\rightarrow0}w(r)$ exists and
\begin{equation}
\lim_{r\rightarrow0}w(r)=\gamma_{1}^{p-1}.\label{eq: limit of w at zero}
\end{equation}
To prove that $\lim_{r\rightarrow0}w(r)$ exists, we suppose, on the
contrary, that
\[
\beta\equiv\limsup_{r\rightarrow0}w(r)>\liminf_{r\rightarrow0}w(r)\equiv\alpha.
\]
Then there exist two sequences of positive numbers $\{\xi_{i}\}$
and $\{\eta_{i}\}$ such that $\xi_{i}\rightarrow0$ and $\eta_{i}\rightarrow0$
and that $\eta_{i}>\xi_{i}>\eta_{i+1}$ for all $i=1,2,\cdots$. Moreover,
the function $w$ has a local maximum at $\xi_{i}$ and a local minimum
at $\eta_{i}$ for all $i=1,2,\cdots$, and
\begin{eqnarray*}
\lim_{i\to\wq}w(\xi_{i})=\beta, &  & \lim_{i\to\wq}w(\eta_{i})=\alpha.
\end{eqnarray*}
Note that $w^{\prime}(\xi_{i})=w^{\prime}(\eta_{i})=0$. By equation
(\ref{eq:equa.of w}), we have that
\[
\Gamma_{\mu}\Big(w^{\frac{1}{p-1}}(\xi_{i})\Big)+\la\xi_{i}^{p}+u(\xi_{i}){}^{p^{*}(s)-p}\xi_{i}^{p-s}=0,
\]
and that
\[
\Gamma_{\mu}\Big(w^{\frac{1}{p-1}}(\eta_{i})\Big)+\la\eta_{i}^{p}+u(\eta_{i}){}^{p^{*}(s)-p}\eta_{i}^{p-s}=0.
\]
By (\ref{eq: estimate of f over u}) and the above two equalities,
we have that
\[
\lim_{i\rightarrow\infty}\Gamma_{\mu}\Big(w^{\frac{1}{p-1}}(\xi_{i})\Big)=\lim_{i\rightarrow\infty}\Gamma_{\mu}\Big(w^{\frac{1}{p-1}}(\eta_{i})\Big)=0.
\]
Since $\Gamma_{\mu}(s)\rightarrow\infty$ as $|s|\rightarrow\infty$,
$\{w(\xi_{i})\}$ and $\{w(\eta_{i})\}$ are bounded. So $\al,\be$
are finite and
\[
\Gamma_{\mu}\big(\beta^{\frac{1}{p-1}}\big)=\Gamma_{\mu}\big(\alpha^{\frac{1}{p-1}}\big)=0.
\]
Recall that $\Gamma_{\mu}(\ga)=0$ if and only if $\ga=\ga_{1}$ or
$\ga=\ga_{2}$. Recall also that in the case when $0<\mu<\bar{\mu},$
we have $0<\ga_{1}<(N-p)/p<\ga_{2}<(N-p)/(p-1)$. Hence
\begin{eqnarray*}
\beta=\gamma_{2}^{p-1} & \text{and} & \al=\ga_{1}^{p-1}.
\end{eqnarray*}
That is,
\begin{eqnarray*}
\lim_{i\to\wq}w(\xi_{i})=\gamma_{2}^{p-1} & \text{ and } & \lim_{i\to\wq}w(\eta_{i})=\gamma_{1}^{p-1}.
\end{eqnarray*}
Note that $\ga_{1}<(N-p)/p<\ga_{2}$. So there exists $\zeta_{i}\in(\eta_{i+1},\xi_{i})$
such that
\[
w(\eta_{i+1})<w(\zeta_{i})=\left(\frac{N-p}{p}\right)^{p-1}<w(\xi_{i})
\]
for $i$ large enough. Then by (\ref{eq: estimate of f over u}) and
equation (\ref{eq:equa.of w}), we obtain that
\[
\zeta_{i}w^{\prime}(\zeta_{i})=\Gamma_{\mu}\left(\frac{N-p}{p}\right)+\la\zeta_{i}^{p}+u(\zeta_{i}){}^{p^{*}(s)-p}\zeta_{i}^{p-s}=-(\bar{\mu}-\mu)+o(1)<0
\]
for $i$ large enough. Here we used the fact that
\[
\Gamma_{\mu}\left(\frac{N-p}{p}\right)=-(\bar{\mu}-\mu).
\]
Hence $w^{\prime}(\zeta_{i})<0$ for $i$ large enough. Therefore
$w$ is strictly decreasing in a neighborhood of $\zeta_{i}$. Since
$\zeta_{i}<\xi_{i}$ and $w(\zeta_{i})<w(\xi_{i})$, there exists
$\zeta_{i}<\zeta_{i}^{\prime}<\xi_{i}$ such that $w(r)\le w(\zeta_{i})$
for $\zeta_{i}<r<\zeta_{i}^{\prime}$ and $w(\zeta_{i}^{\prime})=w(\zeta_{i})$.
Thus $w^{\prime}(\zeta_{i}^{\prime})\ge0$. However, by equation (\ref{eq:equa.of w}),
we have that $w^{\prime}(\zeta_{i}^{\prime})<0$. We reach a contradiction.
Therefore $\lim_{r\rightarrow0}w(r)$ exists.

Set $k^{p-1}=\lim_{r\rightarrow0}w(r)$. We will prove that $k=\ga_{1}$.

We claim that $k\le(N-p)/p$. Otherwise, choose $\ep>0$ such that
$k-\ep>(N-p)/p$. Then for $r$ small enough we have $w(r)>(k-\ep)^{p-1}$,
that is, $-ru^{\prime}(r)/u(r)>k-\ep$ for $r$ small enough. This
implies that $u(r)\ge Cr^{\ep-k}$ for $r$ small enough, which implies
$u\not\in L^{p^{*}}(B)$. We reach a contradiction. Thus $k\le(N-p)/p$.

By (\ref{eq: estimate of f over u}) and equation (\ref{eq:equa.of w}),
we have that
\[
\lim_{r\to0}rw^{\prime}(r)=\Ga_{\mu}(k).
\]
We claim that $\Ga_{\mu}(k)=0$. Otherwise, suppose that $\Ga_{\mu}(k)\ne0$.
Note that for any $0<s<s_{0}$, we have
\[
w(s_{0})=w(s)+\int_{s}^{s_{0}}w^{\prime}(t)dt.
\]
Then $\Ga_{\mu}(k)\ne0$ implies that $\lim_{s\to0}\left|\int_{s}^{s_{0}}w^{\prime}(t)dt\right|=\wq$
if $s_{0}$ is small enough. This contradicts to (\ref{eq: limit of w at zero}).
Hence $\Ga_{\mu}(k)=0$. Recall that $\Ga_{\mu}(\ga)=0$ if and only
if $\ga=\ga_{1}$ or $\ga=\ga_{2}$. Thus we have either $k=\ga_{1}$
or $k=\ga_{2}$. Then we deduce that $k=\ga_{1}$ since $k\le(N-p)/p<\ga_{2}$.
This proves (\ref{eq: limit of w at zero}).

As a result, (\ref{eq: limit of w at zero}) implies that for any
$\ep>0$ sufficiently small there exist $C,c>0$ such that
\[
cr^{-\ga_{1}+\ep}\le u(r)\le Cr^{-\ga_{1}-\ep}
\]
for $r>0$ small enough. Choose $\ep=\ep_{0}>0$ such that $p-s-(p^{*}(s)-p)(\ga_{1}+\ep_{0})>0$.
This is possible since $\ga_{1}<(N-p)/p$. Applying (\ref{eq: estimate of f over u}),
we obtain that
\begin{equation}
u(r){}^{p^{*}(s)-p}r^{p-s}\le Cr^{p-s-(p^{*}(s)-p)(\ga_{1}+\ep_{0})}\equiv Cr^{\de_{0}}\label{eq: 3.1.8}
\end{equation}
for $r>0$ small enough. Here $\de_{0}=p-s-(p^{*}(s)-p)(\ga_{1}+\ep_{0})>0$.

Now we prove (\ref{eq: estimate of w}). Let $w_{1}(r)=w(r)-\ga_{1}^{p-1}$.
Then $w_{1}(r)\to0$ as $r\to0$. We prove that $w_{1}(r)=o(r^{\de})$
as $r\to0$ for some $\de>0$.

By equation (\ref{eq:equa.of w}) and the definition of $\Ga_{\mu}$
(see (\ref{eq: definition of Gamma-mu})), we have
\begin{equation}
\begin{aligned}w_{1}^{\prime}(r) & =w^{\prime}(r)=\frac{1}{r}\Gamma_{\mu}\left(w^{\frac{1}{p-1}}(r)\right)+\frac{1}{r}\left(u(r){}^{p^{*}(s)-p}r^{p-s}+\la r^{p}\right)\\
 & =\frac{1}{r}\left((p-1)w^{\frac{p}{p-1}}(r)-(N-p)w(r)+\mu\right)+\frac{1}{r}\left(u(r){}^{p^{*}(s)-p}r^{p-s}+\la r^{p}\right)\\
 & =\frac{A(r)}{r}w_{1}(r)+B(r),
\end{aligned}
\label{eq: equation of w_1}
\end{equation}
for $r$ small enough, where $A(r)\to p\ga_{1}-(N-p)<0$ as $r\to0$
and
\begin{equation}
B(r)=\frac{1}{r}\left(u(r){}^{p^{*}(s)-p}r^{p-s}+\la r^{p}\right)=O\left(r^{\de_{0}-1}\right)\qquad\text{ as }r\to0,\label{eq: function B}
\end{equation}
by (\ref{eq: 3.1.8}). Here $\de_{0}>0$ is defined as in (\ref{eq: 3.1.8}).

Fix $r_{0}>0$ small and define $h(r)=\int_{r}^{r_{0}}A(\tau)\tau^{-1}d\tau$
for $0<r<r_{0}$. Since $w_{1}$ is a solution to equation (\ref{eq: equation of w_1}),
it has the following form
\[
w_{1}(r)=\int_{0}^{r}e^{h(s)-h(r)}B(s)ds.
\]
Since $h(s)-h(r)=\int_{s}^{r}A(\tau)\tau^{-1}d\tau<0$ for $0<s<r$,
we obtain that $e^{h(s)-h(r)}\le1$ for $0<s<r$. Hence by (\ref{eq: function B}),
we have for $r$ small enough that
\[
|w_{1}(r)|\le\int_{0}^{r}|B(s)|ds\le Cr^{\de_{0}}.
\]
Here $\de_{0}>0$ is as in (\ref{eq: 3.1.8}). This proves (\ref{eq: estimate of w}).

Recall that $w$ is defined as (\ref{eq: definition of w}). The conclusion
of Theorem \ref{thm: Asym. beha. 1} follows easily from estimate
(\ref{eq: estimate of w}). The proof of Theorem \ref{thm: Asym. beha. 1}
in the case  $0<\mu<\bar{\mu}$ is complete. \end{proof}

\appendix

\section{A comparison result}

Let $\Om$ be a bounded domain in $\R^{N}$ containing the origin.
Define the operator $-L_{p}$ by
\begin{eqnarray*}
-L_{p}u=-\De_{p}u-\frac{\mu}{|x|^{p}}|u|^{p-2}u, &  & u\in W_{0}^{1,p}(\Om).
\end{eqnarray*}
We have the following result.
\begin{lem}
\label{lem: eigenvalue comparison} Let $\rho_{1},\rho_{2}$ be two
nonnegative functions in $L^{\frac{N}{p}}(\Om)$. Denote by $\la_{1}(\rho_{i})$
the first eigenvalue of the operator $-L_{p}$ with respect to weight
$\rho_{i}$, $i=1,2$, that is,
\[
\la_{1}(\rho_{i})=\inf\left\{ \frac{Q(\var)}{\int_{\Om}\rho_{i}|\var|^{p}dx}:\var\in W_{0}^{1,p}(\Om),\var\ne0\right\} ,
\]
for $i=1,2$, where
\[
Q(\var)=\int_{\Om}\left(|\na\var|^{p}-\frac{\mu}{|x|^{p}}|\var|^{p}\right)dx.
\]
If $\rho_{1}\ge\rho_{2}$, then either $\la_{1}(\rho_{1})<\la_{1}(\rho_{2})$
or $\la_{1}(\rho_{1})=\la_{1}(\rho_{2})$ and $e_{i}=0$ $(i=1,2)$
on $\{x\in\Om:\rho_{1}(x)\ne\rho_{2}(x)\}$, where $e_{1},e_{2}$
are the first eigenfunctions corresponding to the first eigenvalues
$\la_{1}(\rho_{1})$ and $\la_{1}(\rho_{2})$ respectively. \end{lem}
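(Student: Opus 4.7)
The plan is to first establish the easy monotonicity inequality, then analyze the equality case via the usual variational trick, and finally invoke simplicity of the first eigenvalue to transfer the vanishing property from $e_2$ to $e_1$.

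For the monotonicity direction, I would observe that since $\rho_{1}\ge\rho_{2}\ge 0$ pointwise, we have $\int_{\Om}\rho_{1}|\var|^{p}dx\ge\int_{\Om}\rho_{2}|\var|^{p}dx$ for every admissible $\var\in W_{0}^{1,p}(\Om)$. Because $Q(\var)\ge 0$ on $W_{0}^{1,p}(\Om)$ under the operating assumption $\mu<\bar{\mu}$ (which is the Hardy constant relevant in the body of the paper), dividing by the positive denominators and passing to the infimum gives $\la_{1}(\rho_{1})\le\la_{1}(\rho_{2})$.

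Next, suppose $\la_{1}(\rho_{1})=\la_{1}(\rho_{2})=:\la$ and let $e_{1},e_{2}$ be the associated first eigenfunctions, which can be chosen positive and which exist by standard direct-method arguments for $-L_{p}$ on the bounded domain $\Om$ with $\rho_{i}\in L^{N/p}(\Om)$. I would plug $e_{2}$ into the Rayleigh quotient for $\rho_{1}$, producing the sandwich
\[
\la=\la_{1}(\rho_{1})\le\frac{Q(e_{2})}{\int_{\Om}\rho_{1}|e_{2}|^{p}dx}=\frac{\la\int_{\Om}\rho_{2}|e_{2}|^{p}dx}{\int_{\Om}\rho_{1}|e_{2}|^{p}dx}\le\la,
\]
where the final step uses $\rho_{1}\ge\rho_{2}$. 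All inequalities collapse to equalities, forcing $\int_{\Om}(\rho_{1}-\rho_{2})|e_{2}|^{p}dx=0$ and hence $e_{2}=0$ a.e. on $\{\rho_{1}\ne\rho_{2}\}=\{\rho_{1}>\rho_{2}\}$.

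To obtain the same conclusion for $e_{1}$, I note that the very same chain shows $Q(e_{2})/\int_{\Om}\rho_{1}|e_{2}|^{p}dx=\la_{1}(\rho_{1})$, so $e_{2}$ is also a minimizer of the Rayleigh quotient with weight $\rho_{1}$. Invoking the simplicity of the first eigenvalue of $-L_{p}$ on a bounded domain, which for the $p$-Laplacian with a Hardy potential is a classical result, $e_{1}$ must be a scalar multiple of $e_{2}$; therefore $e_{1}$ inherits the vanishing on $\{\rho_{1}\ne\rho_{2}\}$. The main technical obstacle is precisely this simplicity statement: the routine proof of monotonicity and the test-function argument are elementary, but the passage $e_{2}=0\Rightarrow e_{1}=0$ on the exceptional set is not pointwise-pointwise and genuinely needs either a uniqueness-of-first-eigenfunction citation or, alternatively, a self-contained Picone-inequality argument comparing $e_{1}$ and $e_{2}$ directly; I would prefer the cited route for brevity.
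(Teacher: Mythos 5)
Your proposal is correct and follows essentially the same route as the paper: monotonicity of the infimum, testing the $\rho_{1}$-Rayleigh quotient with $e_{2}$ to force $\int_{\Om}(\rho_{1}-\rho_{2})|e_{2}|^{p}dx=0$, and then using simplicity of the first eigenvalue (the paper cites the same kind of result, from Zhang--Wang--Liu) to conclude $e_{1}=ke_{2}$ and transfer the vanishing to $e_{1}$. Your explicit remarks that $Q\ge0$ (via Hardy, since $\mu<\bar{\mu}$) is needed for the monotonicity step, and that the simplicity statement is the only genuinely nontrivial ingredient, are accurate and if anything slightly more careful than the paper's write-up.
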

\begin{proof}
In the case when $\mu=0$, Lemma \ref{lem: eigenvalue comparison}
was proved by Adimurthi and Yadava \cite[Lemma 4.1]{AdimurthiYadava1994}.
Their argument can be easily applied to prove Lemma \ref{lem: eigenvalue comparison}.
For completeness, we give a proof here.

It is direct to verify that $\la_{1}(\rho_{1})\le\la_{1}(\rho_{2})$
by definition. Suppose that $\la_{1}(\rho_{1})=\la_{1}(\rho_{2})$
and $e_{1},e_{2}$ are the first eigenfunctions corresponding to the
first eigenvalues $\la_{1}(\rho_{1})$ and $\la_{1}(\rho_{2})$ respectively.
Then $e_{i}$, $i=1,2$, are nonpositive or nonnegative functions
in $\Om$. We assume that $e_{i}\ge0$ for both $i=1,2$. Then
\[
\la_{1}(\rho_{i})=\frac{Q(e_{i})}{\int_{\Om}\rho_{i}|e_{i}|^{p}dx},\quad i=1,2.
\]
Therefore,
\[
\frac{Q(e_{2})}{\int_{\Om}\rho_{1}|e_{2}|^{p}dx}\ge\frac{Q(e_{1})}{\int_{\Om}\rho_{1}|e_{1}|^{p}dx}=\la_{1}(\rho_{1})=\la_{1}(\rho_{2})=\frac{Q(e_{2})}{\int_{\Om}\rho_{2}|e_{2}|^{p}dx}.
\]
Since $\rho_{1}\ge\rho_{2}$, we obtain that $\int_{\Om}\rho_{1}|e_{1}|^{p}dx=\int_{\Om}\rho_{2}|e_{1}|^{p}dx$.
That is,
\[
\int_{\Om}(\rho_{1}-\rho_{2})|e_{1}|^{p}dx=0.
\]
Since $e_{1}$ is nonnegative in $\Om$, we have that $e_{1}=0$ on
$\{x\in\Om:\rho_{1}(x)\ne\rho_{2}(x)\}$.

Hence
\[
\la_{1}(\rho_{2})=\la_{1}(\rho_{1})=\frac{Q(e_{1})}{\int_{\Om}\rho_{1}|e_{1}|^{p}dx}=\frac{Q(e_{1})}{\int_{\Om}\rho_{2}|e_{1}|^{p}dx},
\]
which implies that $e_{1}$ is also an eigenfunction of $\la_{1}(\rho_{2})$.
Thus $e_{1}=ke_{2}$ for some $k\ne0$ (see \cite{Zhang-Wang-Liu2013}).
Thus $e_{2}=0$ on $\{x\in\Om:\rho_{1}(x)\ne\rho_{2}(x)\}$. This
finishes the proof of Lemma \ref{lem: eigenvalue comparison}.
\end{proof}

\section{A uniqueness result on ordinary differential equations}

The following result can be found in standard textbooks on ordinary
differential equations.
\begin{lem}
\label{lem:Uniqueness lemma}Let $(a,b)\subset\R$ and $(c,d)\subset\R$
be two intervals. Assume that $f:\R\rightarrow\R$ be a continuous
function. Consider the initial value problem
\begin{equation}
\begin{cases}
y^{\prime}(t)=f(y(t)), & (t,y)\in(a,b)\times(c,d),\\
y(t_{0})=y_{0},
\end{cases}\label{eq:uniqueness ODE}
\end{equation}
 for some $(t_{0},y_{0})\in(a,b)\times(c,d)$. Then we have

(1) if $f$ is locally Lipshcitz continuous in $(c,d)$, equation
(\ref{eq:uniqueness ODE}) admits at most one solution on $(a,b)$;

(2) if $f$ is nonincreasing in $(y_{0},d)$, then equation (\ref{eq:uniqueness ODE})
admits at most one nondecreasing solution on $(t_{0},b)$;

(3) if $f$ is nondecreasing in $(c,y_{0})$, then equation (\ref{eq:uniqueness ODE})
admits at most one nondecreasing solution on $(a,t_{0})$. \end{lem}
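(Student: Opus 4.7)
\textbf{Proof plan for Lemma \ref{lem:Uniqueness lemma}.}

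The three assertions have a common shape: suppose $y_1, y_2$ are two solutions to \eqref{eq:uniqueness ODE} (in (2) and (3) both assumed nondecreasing), set $w=y_1-y_2$, and study the sign of $w'$ on the set $\{w>0\}$. My plan is to treat (1) by a standard Picard/Gronwall argument and treat (2) and (3) by an elementary monotonicity argument, exploiting the fact that where $w>0$ the monotonicity assumption on $f$ forces $w$ to move in a direction inconsistent with the initial condition $w(t_0)=0$.

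For part (1), I would fix $t_0\in(a,b)$ and any compact interval $[t_0,t_0+\tau]\subset(a,b)$ on which both $y_1$ and $y_2$ stay in a compact subinterval $[c',d']\subset(c,d)$; on $[c',d']$ the function $f$ is Lipschitz with some constant $L$. Integrating the ODE and subtracting gives
\[
|w(t)|\le L\int_{t_0}^{t}|w(s)|\,ds,
\]
so Gronwall's inequality yields $w\equiv 0$ on $[t_0,t_0+\tau]$. The set $\{t\in(a,b):y_1(t)=y_2(t)\}$ is then closed by continuity and open by the local argument just given, hence equals $(a,b)$ by connectedness.

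For part (2), suppose $y_1,y_2$ are two nondecreasing solutions on $(t_0,b)$ with $y_1(t_0)=y_2(t_0)=y_0$. Since $f$ is continuous and nonincreasing on $(y_0,d)$, it extends to a nonincreasing function on $[y_0,d)$. At any $t>t_0$ where $w(t)=y_1(t)-y_2(t)>0$ we have $y_1(t)>y_2(t)\ge y_0$, so
\[
w'(t)=f(y_1(t))-f(y_2(t))\le 0.
\]
Now assume for contradiction that $w(t_1)>0$ for some $t_1>t_0$ and let $t_*=\sup\{t\in[t_0,t_1]:w(t)\le 0\}$; then $w(t_*)=0$ by continuity and $w>0$ on $(t_*,t_1]$, so $w$ is nonincreasing on $(t_*,t_1]$, giving $w(t_1)\le \lim_{t\to t_*^+}w(t)=0$, a contradiction. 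Swapping the roles of $y_1$ and $y_2$ gives $w\ge 0$, hence $w\equiv 0$ on $(t_0,b)$. Part (3) is obtained by the mirror argument: for $t<t_0$ we now have $y_1(t),y_2(t)\in(c,y_0]$, and on $\{w>0\}$ the nondecreasing assumption on $f$ on $(c,y_0)$ (extended by continuity to $(c,y_0]$) forces $w'\ge 0$, which is again incompatible with $w(t_0)=0$ if $w>0$ somewhere to the left.

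The only subtlety I anticipate is the boundary issue at $y=y_0$: the monotonicity hypothesis is stated on the open interval $(y_0,d)$ (respectively $(c,y_0)$), while a nondecreasing solution starting from $y_0$ can touch the value $y_0$ on a right-neighborhood of $t_0$. This is handled by observing that continuity of $f$ extends the one-sided monotonicity to the half-closed interval, so the inequality $f(y_1)\le f(y_2)$ still holds at points where $y_2=y_0<y_1$. Once that point is dispatched, the $\sup/\inf$ argument above closes cleanly, and the proof is complete.
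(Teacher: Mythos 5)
Your proposal is correct and follows essentially the same route as the paper: parts (2) and (3) are proved by locating the last point where the two solutions agree and using the monotonicity of $f$ on $[y_{0},d)$ (extended from the open interval by continuity) to show the difference cannot subsequently become positive, exactly as in the paper's argument, while part (1) is the standard Gronwall/connectedness argument that the paper omits as routine. Your explicit remark about the boundary value $y=y_{0}$ is a point the paper passes over silently, but it is resolved the same way.
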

\begin{proof}
(1) can be proved in a standard way. We omit the details. We only
prove conclusion (2). We can prove (3) similarly.

Suppose that $f$ is nonincreasing in $(y_{0},d)$ and $y_{1},y_{2}$
are two distinct nondecreasing solutions of equation (\ref{eq:uniqueness ODE})
on $(t_{0},b)$. With no loss of generality, we assume that $y_{1}(t_{1})>y_{2}(t_{1})$
for some $t_{1}\in(t_{0},b)$. Let
\[
t_{2}=\inf\{t\in[t_{0},t_{1}):y_{1}(s)>y_{2}(s)\text{ for }s\in(t,t_{1})\}.
\]
Then $t_{1}>t_{2}\geq t_{0}$, $y_{1}(t_{2})=y_{2}(t_{2})$, and $y_{1}(t)>y_{2}(t)$
for $t\in(t_{2},t_{1}]$. Hence
\begin{eqnarray*}
y_{1}^{\prime}(t)-y_{2}^{\prime}(t)=f(y_{1}(t))-f(y_{2}(t))\leq0 &  & \text{for }t\in(t_{2},t_{1}),
\end{eqnarray*}
since $f$ is nonincreasing in $(y_{0},d)$. Thus $y_{1}-y_{2}$ is
nonincreasing on $[t_{2},t_{1}]$. In particular, we have that $y_{1}(t_{1})-y_{2}(t_{1})\leq y_{1}(t_{2})-y_{2}(t_{2})=0$.
We reach a contradiction. This proves (2).
\end{proof}
\emph{Acknowledgment. }The second named author is financially supported
by the Academy of Finland, project 259224.

\end{document}